\theoremstyle{plain}  
  \newtheorem{thm}{Theorem}[section]
    \newtheorem*{mainthm}{Main Theorem}
  \newtheorem{lem}[thm]{Lemma}    
  \newtheorem{prop}[thm]{Proposition}
\theoremstyle{definition}  
\theoremstyle{remark}
  \newtheorem{rem}[thm]{Remark}
  \newtheorem*{ack}{Acknowledgements}
\newcommand{\C}{\mathbb C} 
\newcommand{\R}{\mathbb R} 
\newcommand{\bbH}{\mathbb H}  
\newcommand{\calC}{\mathcal C} 
\newcommand{\calG}{\mathcal G}  
\newcommand{\calQ}{\mathcal Q} 
\newcommand{\sV}{\mathscr V}  
\newcommand{\sE}{\mathscr E}
\newcommand{\sF}{\mathscr F}
\newcommand{\su}{{\mathfrak{su}}} 
\newcommand{\alp}{\alpha} 
\newcommand{\Si}{\Sigma} 
\newcommand{\ep}{\epsilon} 
\newcommand{\vep}{\varepsilon} 
\newcommand{\om}{\omega}
\newcommand{\Del}{\Delta}
\newcommand{\bd}{\partial} 
\newcommand{\wh}{\widehat}
\newcommand{\wt}{\widetilde}
\newcommand{\sm}{\setminus}
\newcommand{\la}{\langle}
\newcommand{\ra}{\rangle}
\newcommand{\one}{\mathbbm{1}}
\newcommand{\mi}{\mathbbm{i}}
\newcommand{\mj}{\mathbbm{j}}
\newcommand{\mk}{\mathbbm{k}}
\newcommand{\vol}{{\mathrm{vol}}}
\newcommand{\matH}{\bbH_{\mathrm{mat}}}
\DeclareMathOperator{\tr}{tr}
\DeclareMathOperator{\Area}{Area}
\DeclareMathOperator{\Span}{Span}
\DeclareMathOperator{\Poly}{Poly}
\DeclareMathOperator{\Gr}{Gr}
\begin{document}
\title{Convex Polyhedra in the $3$-Sphere and Tilings of the $2$-Sphere}
\author{Kentaro Ito}
\address{Graduate School of Mathematics, Nagoya University, Nagoya 464-8602, Japan}
\email{itoken@math.nagoya-u.ac.jp}
\subjclass[2000]{Primary 52A15, 52A55; Secondary 52C20.}
\keywords{Sphere, convex polyhedra, tiling, $SU(2)$, Maurer-Cartan form. }
\date{\today}

\begin{abstract}
We show that for every convex polyhedral sphere $P$ in $S^3$,  
there exist two canonical, non-edge-to-edge tilings of $S^{2}$ whose tiles are given by 
all the faces of $P$ and the dual convex polyhedral sphere $P^*$ to $P$. 
Under the identifications of $S^{3}$ with the Lie group $SU(2)$, 
and of $S^{2}$ with the unit sphere in the Lie algebra $\su(2)$ of $SU(2)$,  
our result is obtained by considering the set $\wt P$ of outward unit normal vectors to $P$ 
and the maps from $\wt P$ to $S^{2}$ defined by using the left and right Maurer-Cartan forms on $SU(2)$. 
\end{abstract} 

\maketitle

\section{Introduction}

A {\it convex polyhedron} in the unit $3$-sphere $S^{3}$ is a  convex domain bounded by 
a finite number of totally geodesic $2$-spheres.  
A {\it convex polyhedral sphere} is the relative boundary of some convex polyhedron. 
Then, each face of a convex polyhedral sphere 
is isometric to a geodesic polygon in the unit $2$-sphere $S^{2}$. 
The {\it dual} $P^{*}$ to a convex polyhedral sphere $P$ is 
the set of all points in $S^{3}$ whose spherical distance from $P$ is equal to $\pi/2$, 
which is also a convex polyhedral sphere.  
In this paper, we show the following (see also Figure 1): 
\begin{mainthm}
For every convex polyhedral sphere $P$ in $S^3$,  
there exist two canonical, non-edge-to-edge tilings of $S^{2}$ whose tiles are given by 
all the faces of $P$ and the dual convex polyhedral sphere $P^*$ to $P$.  
\end{mainthm}
Here, a polygonal tiling is said to be {\it edge-to-edge} if the intersection 
$T_{1} \cap T_{2}$ of two tiles $T_{1},\,T_{2}$ is a segment, 
it coincides with an edge of $T_{1}$ and an edge of $T_{2}$. 
We say that a polygonal tiling is  {\it non-edge-to-edge} if it is not edge-to-edge. 

We now explain how to construct the tilings in the main theorem. 
Let $P$ be a polygonal sphere that bounds a convex polyhedron $Q$. 
We choose orientations on the faces of $P$ 
determined by outward-pointing normal vectors to the faces of $P=\bd Q$. 
Let $P^{*}$ be the dual convex polyhedral sphere to $P$, 
which bounds a convex polyhedron $Q^{*}$. 
We remark that $Q \cap Q^{*}=\emptyset$. 
We choose orientations on the faces of $P^{*}$ 
determined by {\it inward}-pointing normal vectors to the faces of $P^{*}=\bd Q^{*}$. 
We now develop $P$ into $S^{2}$ as follows: 
Let $\{f_{1}, \ldots, f_{l}\}$ be the set of faces of $P$. 
Each face $f_{i}$ of $P$ is mapped into $S^{2}$ by an orientation preserving isometry 
$\varphi_{i}:f_{i} \to S^{2}$. 
Furthermore, if two faces $f_{i},\,f_{j}$ of $P$ meet along an edge $e$ of $P$ 
with the exterior dihedral angle $\theta$, we require that the following three conditions hold: 
(i) the images $\varphi_{i}(e)$ and $\varphi_{j}(e)$ of edge $e$ lie in  the same geodesic circle 
$\gamma$ on $S^{2}$, 
 (ii) $\varphi_i(f_i)$ and $\varphi_j(f_j)$ lie on the oposite sides of $\gamma$ from each other, and 
 (iii) $\varphi_{j}(f_{j})$ is slid to the right along $\gamma$ 
about distance $\theta$ when viewed from $\varphi_{i}(f_{i})$. 
It turns out that the developed images of the faces of $P$ have mutually disjoint interiors. 
Note that the developed image $\wh P \subset S^{2}$ of $P$ is uniquely determined 
up to orientation preserving isometries on $S^{2}$. 
Similarly, we develop $P^{*}$ into $S^{2}$ via orientation preserving isometries; 
However, in this case, if two faces meet along an edge, we require that 
the image of one face is slid to the {\it left} when viewed from the other.  
We denote by  $\wh P^{*} \subset S^{2}$ the developed image of  $P^{*}$. 
Then, the main theorem claims that 
there is an orientation preserving  isometry  $\psi$ of $S^{2}$ 
such that $\wh P \cup \psi \bigl(\wh P^{*}\bigr)$ gives an non-edge-to-edge tiling of $S^{2}$. 
If we change the direction of the slide in the above construction from right to left and from left to right, 
respectively, we also obtain another tiling of $S^{2}$. 
In this way, we obtain two canonical tilings of $S^{3}$. 

{\it Regular polyhedra} in $S^{3}$ are convex polyhedra whose faces are congruent 
regular spherical polygons.  
As in the case of regular polyhedra in $\R^{3}$, 
there are five types of regular polyhedra in $S^{3}$. 
Each type of a regular polyhedron is parameterized by the length of its edges up to congruence.  
Figure 1 shows two canonical, non-edge-to-edge tilings 
obtained from an icosahedron $P$ in $S^{3}$  and its dual dodecahedron  $P^{*}$. 
The tilings of $S^{2}$ in the main theorem obtained from regular polyhedra in $S^{3}$ 
and their dual polyhedra are already known in \cite{Adams} as kaleidoscope tilings. 
However, tilings obtained from general convex polyhedra and their dual polyhedra 
seem to be new as far as we know. 

\begin{figure}[h]
\begin{center}
\includegraphics[height=3.5cm]{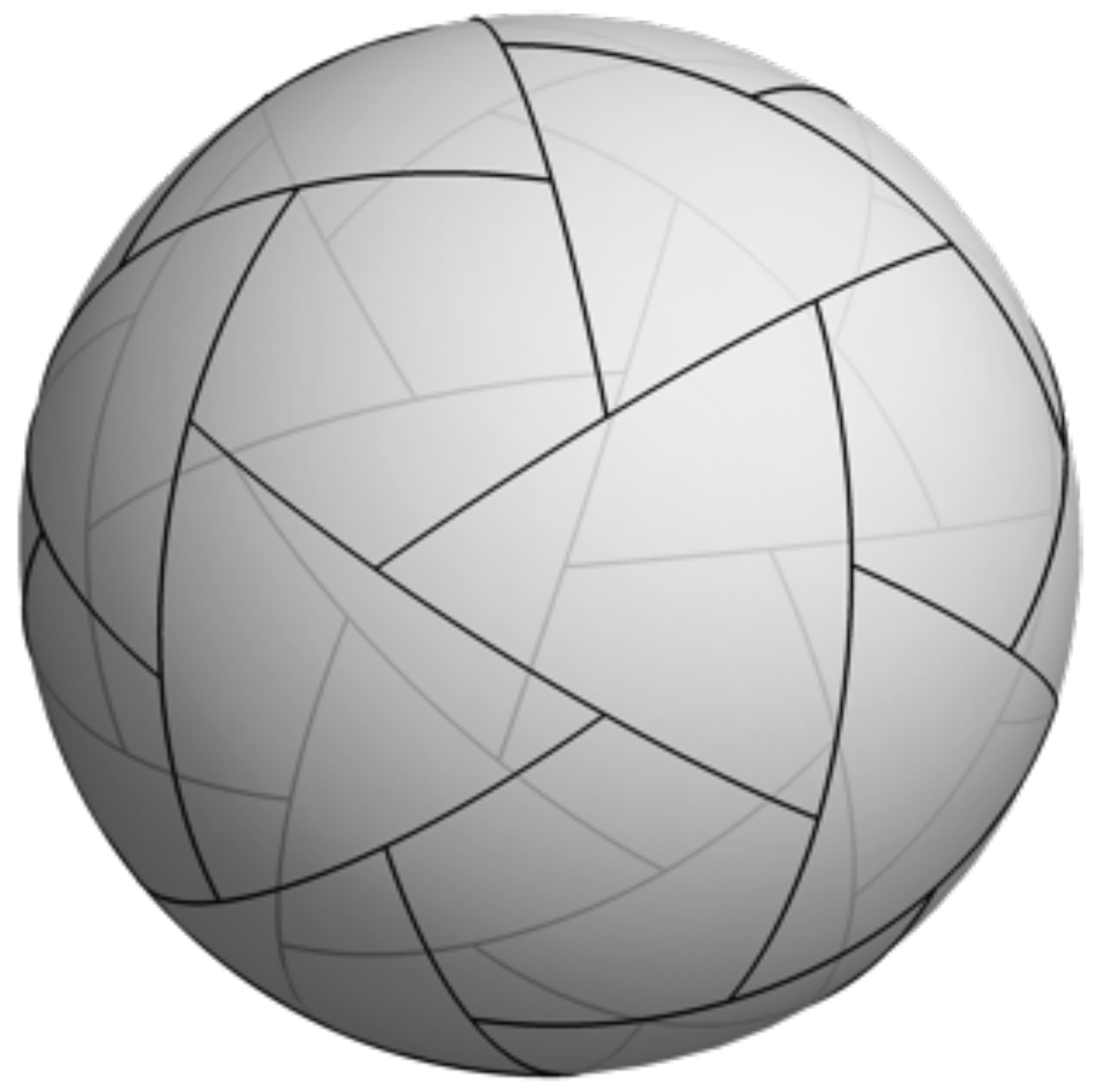}
\hspace{3cm}
\includegraphics[height=3.5cm]{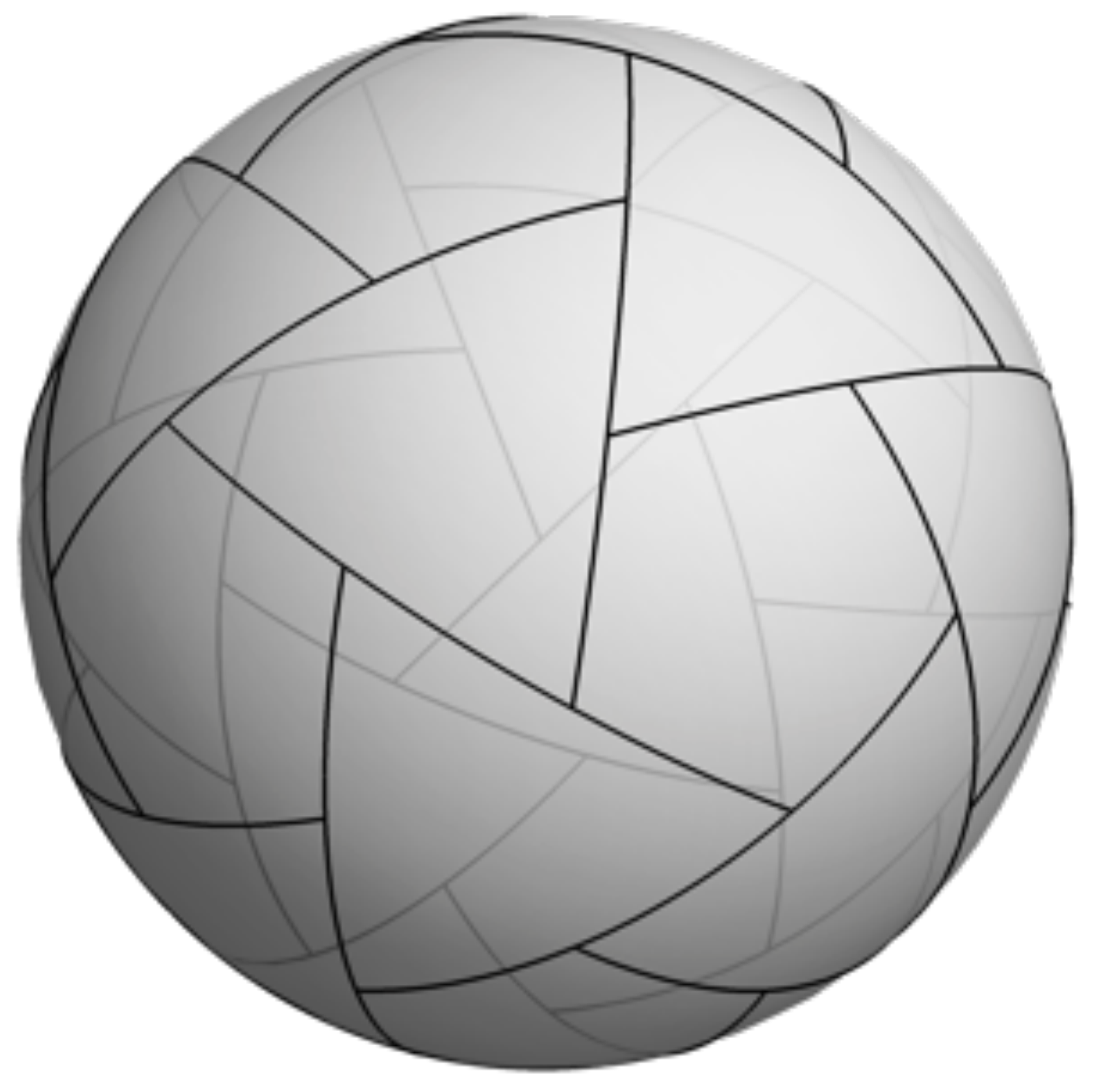}
\caption{Two canonical non-edge-to-edge tilings of $S^{2}$ 
obtained from a regular icosahedron in $S^{3}$ 
and its dual dodecahedron. }
\end{center}
\end{figure}

We now discuss the proof of the main theorem. 
Although the construction of tilings described above works well locally (see Lemma 4.2), 
it is difficult to see that tilings of the whole $S^{2}$ are also obtained.   
Thus, we proceed in another way. 
For a given polyhedral sphere $P$, we consider the subset $\wt P$ 
of the unit tangent bundle $T^{1}S^{3}$ of $S^{3}$ that consists of the outward unit normal vectors to $P$. 
Note that the subset of vectors in $\wt P$ based at a vertex $v$ of $P$ 
is naturally identified with the face $v^{*}$ of $P^{*}$ that is dual to $v$. 
Similarly, the subset of vectors in $\wt P$ whose base points lie on  
an edge $e$ of $P$ is identified with 
the product $e \times e^{*}$, where $e^{*}$ is an edge of $P^{*}$ that is dual to $e$. 
From this observation, we obtain an edge-to-edge tiling of $\wt P$ 
whose tiles are all faces of $P$ and $P^{*}$, and planer rectangles 
corresponding to the edges of $P$. 
 We show that $\wt P$ is homeomorphic to $S^{2}$ (see Proposition 5.1). 

To prove the main theorem, 
we identify $S^{3}$ with the Lie group $SU(2)$ and 
$S^{2}$ with the unit sphere in the Lie algebra $\su(2) \cong \R^{3}$ of $SU(2)$.  
Then, the left and right Maurer-Cartan forms on $SU(2) \cong S^{3}$ 
induce maps $\om,\,\om':T^{1}S^{3} \to S^{2}$. 
Let us consider the restriction $\om|_{\wt P}$ (resp. $\om'|_{\wt P}$) of $\om$ 
(resp. $\om'$) to $\wt P$, which is also denoted by $\om$ (resp. $\om'$).  
We show the following in Section 7: 
(i) the map $\om:\wt P \to S^{2}$ is surjective and takes each of the faces of $P$ or $P^{*}$ 
(which is regarded as a subset of $\wt P$) isometrically into $S^{2}$, 
(ii) the images of planar rectangles are geodesic segments in $S^{2}$, 
and (iii) the images of the faces of $P$ and $P^{*}$ have mutually disjoint interiors. 
The map $\om':\wt P \to S^{2}$ also has the same property.  
Thus, we obtain two non-edge-to-edge tilings of $S^{2}$ corresponding to $\om$ and $\om'$. 
We remark that  these tilings are the same as those obtained by developing 
$P$ and $P^{*}$ into $S^{2}$ as described above. 

We now explain where the idea of this paper comes from. 
As the $3$-sphere $S^{3}$ is identified with $SU(2)$,  $3$-dimensional anti-de Sitter space 
$AdS^{3}$ is identified with $SU(1,1)$. 
Therefore, $S^{3}$ and $AdS^{3}$ have many common features. 
It was observed by Mess \cite{Mess} that each spacelike, complete  
convex polyhedral plane in $AdS^{3}$ without vertices provides two tilings of the hyperbolic plane $H^{2}$. 
This observation was used in \cite{Mess} to provide an alternative proof of Thurston's earthquake theorem. 
Our result can be seen as the analog of Mess's observation in the setting of 
convex polyhedral spheres in $S^{3}$, which inevitably have vertices. 
Similarly, each spacelike, complete convex polyhedral plane in $AdS^{3}$ with vertices 
 and its dual plane provide tilings of $H^{2}$. 
The paper on this topic is now in preparation. 

This paper is organized as follows: 
In Section 2, we give the basic notion and definitions concerning the unit 3-sphere $S^{3}$. 
In Sections 3 and 4, we consider convex polyhedra in $S^{3}$ and their dual polyhedra   
and provide some basic properties. 
In Section 5, we consider the set $\wt P$ of outward unit normal vectors 
to a convex polyhedral sphere $P$ and show that $\wt P$ is homeomorphic to $S^{2}$. 
In Section 6, we explain the identification of $S^{3}$ with $SU(2)$ and introduce the 
left and right Maurer-Cartan forms. 
In Section 7, we give the proof of the main theorem by 
combining all the results from the previous chapters. 

\begin{ack}
The author wishes to express his thanks to Shin Nayatani and Hiroki Fujino for their 
interest and many helpful suggestions. 
\end{ack}

\section{The unit $3$-sphere $S^{3}$}

This section provides definitions of the spaces with which we are concerned. 
We equip the Euclidean $4$-space $\R^{4}$ with a standard inner product 
$$
\la x,y\ra=x_{1}y_{1}+x_{2}y_{2}+x_{3}y_{3}+x_{4}y_{4}. 
$$
The $3$-dimensional unit sphere 
$$
S^{3}=\{x \in \R^{4} : \la x,x\ra=1\}
$$ 
is a Riemannian manifold with constant sectional curvature $1$. 
Denoting the distance between two points $x,y$ in $S^{3}$ by $d(x,y)$,  we have  
$\la x,y\ra=\cos d(x,y)$. 
The tangent space $T_{x}S^{3}$ of $S^{3}$ at $x\in S^3$ 
is identified with the  orthogonal subspace:  
$$
T_{x}S^{3} \cong x^{\perp}:=\{\nu \in \R^{4} : \la x,\nu\ra=0\}. 
$$ 
Therefore, the tangent bundle $TS^{3}$ and the unit tangent bundle 
$T^1S^3$ of $S^{3}$ are realized as submanifolds of  $\R^4 \times \R^4$ as follows: 
\begin{align*}
TS^{3}&=\{(x,\nu)\in \R^{4} \times \R^{4} : \la x,x\ra=1,\,\la x,\nu\ra=0\}, \\
T^{1}S^{3}&=\{(x,\nu)\in \R^{4} \times \R^{4} : \la x,x\ra=\la \nu,\nu\ra=1,\,\la x,\nu\ra=0\}. 
\end{align*}

Given a point $\nu \in S^{3}$, we define the {\it hyperplane} $\Pi_{\nu}$ of $S^{3}$ 
associated with $\nu$ by 
$$
\Pi_{\nu}:=\nu^{\perp} \cap S^{3}=\{x \in S^{3} : \la x,\nu\ra=0\}=\{x \in S^{3} : d(x ,\nu)=\pi/2\},  
$$
which is a  totally geodesic $2$-sphere in $S^{3}$.  
Note that the tangent space $T_{x} \Pi_{\nu}$ of $\Pi_{\nu}$ at $x \in \Pi_{\nu}$ 
is identified with the subspace $x^{\perp} \cap \nu^{\perp}$ of $\R^{4}$.  
We next define the closed {\it half-sphere} $\Delta_{\nu}$ associated with $\nu \in S^{3}$ by 
$$
\Delta_{\nu}:=\{x \in S^{3} : \la x,\nu\ra \le 0\}=\{x \in S^{3} :  d(x,\nu) \ge \pi/2\}. 
$$
Thus, the relative boundary of $\Delta_{\nu}$ is $\Pi_{\nu}$. 
The complement $S^{3} \setminus \Delta_{\nu}$ is denoted by $\Delta_{\nu}^{c}$, 
which is the open ball of radius $\pi/2$ centered at $\nu$.

Throughout this paper, careful treatments of the orientations of the faces of polyhedra are needed. 
We regard an {\it orientation} on a manifold $M$ as an equivalence class of ordered 
bases for the tangent space $T_{x}M$ at each $x \in M$.   
We define the orientation on $S^{3}$  as follows: 
An ordered basis $(u_{1},u_{2},u_{3})$ for the tangent space $T_{x}S^{3}$ at $x \in S^{3}$ is 
in the positive orientation on $S^{3}$ if the ordered basis $(x, u_{1},u_{2},u_{3})$ for 
$T_{x}\R^{4} \cong \R^{4}$ is in the positive orientation on $\R^{4}$. 
Let $S$ be an orientable, smoothly embedded surface in $S^{3}$. 
Choosing an orientation on $S$ is equivalent to choosing a unit normal vector field $n$ on $S$: 
If  the ordered basis $(u_{1},u_{2})$ for $T_{p} S$ ($p \in S$) is in the positive orientaiton on $S$, 
we choose the unit normal vector $n(p)$ to $S$ so that the ordered basis 
$(u_{1},u_{2},n(p))$ for $T_{p}S^{3}$ is in the positive orientation on $S^{3}$.  
In this situation, we say that  the unit normal vector field $n$ determines the orientation on $S$. 
Throughout this paper, we  adopt the orientation on a hyperplane 
$\Pi_{\nu}$ for $\nu \in S^{3}$ which is determined by the unit normal vector field $\nu$ on $\Pi_{\nu}$. 

A {\it geodesic} $\gamma$ on $S^{3}$ is the intersection of $S^{3}$ 
with some 2-dimensional subspace $V \subset \R^{4}$. 
Choosing an orientation of  $V$ is equivalent to choosing an orientation of  $\gamma=S^{3} \cap V$: 
If $(u_{1},u_{2})$ is in the positive orientation of $V$, 
 we orient $\gamma$ so that the parametrization $\gamma(t)=(\cos t)u_{1}+(\sin t)u_{2}$ 
is the positive orientation on $\gamma$.  

The {\it dual geodesic} $\gamma^{*}$ to a geodesic $\gamma=S^{3} \cap V$ is defined to be 
$\gamma^{*}=S^{3} \cap V^{\perp}$, where $V^{\perp} \subset \R^{4}$ 
is the orthogonal complement of $V$. 
If $\gamma$ is oriented, we give an associated orientation to $\gamma^{*}$ as follows: 
If $(u_{1},u_{2})$ is in the positive orientation of $V$, we choose an orientation 
$(u_{3},u_{4})$ of $V^{\perp}$ 
so that $(u_{1},u_{2},u_{3},u_{4})$ is in the positive orientation of $\R^{4}$. 

\section{Convex polyhedra in $S^{3}$ and their dual polyhedra}

In this section, we consider convex polyhedra in $S^{3}$ and their dual polyhedra.   
As we describe in the proof of Lemma 3.1, convex polyhedra in $S^{3}$ naturally 
correspond to convex polyhedra in the Euclidean space $\R^{3}$. 
Hence, the properties of convex polyhedra in $S^{3}$ 
and their dual polyhedra are deduced from those in $\R^{3}$. 
We refer the reader to Gr\"{u}nbarm \cite{Gru} and Matou\v{s}ek \cite{Mat} for information on 
convex polyhedra in the Euclidean spaces and their dual polyhedra. 
Some properties of convex polyhedra in $S^{3}$ and their dual polyhedra can also be found in \cite{HR}. 

We first fix some terminology for a subset $X$ of $S^{3}$. 
We say that  $X$ is {\it convex} if for any $x,y \in X$, 
(one of) the shortest geodesic arc on $S^{3}$ connecting $x$ and $y$ is contained in $X$. 
We say that $X$ is {\it hemispherical} if 
$X$ is contained in some open half-sphere; more precisely,  
there exists $x_{0} \in S^{3}$ such that $X \subset \Delta_{x_{0}}^{c}$, 
where $\Delta_{x_{0}}^{c}=S^{3} \setminus \Delta_{x_{0}}$ is the open ball 
of radius $\pi/2$ centered at $x_{0}$.  
We say that $X$ is {\it non-planar} if there is no hyperplane of $S^{3}$ containing $X$. 

We now consider convex polyhedra in $S^{3}$.  
Take a hemispherical, non-planar finite subset $\sV$ of $S^{3}$.  
The {\it cone} $\calC(\sV) \subset \R^{4}$ spanned by $\sV$ is defined by
$$
\calC(\sV):=\left\{\sum_{v \in \sV} a_{v}v  \in \R^{4}: a_{v} \ge 0\right\}. 
$$
The set $CH(\sV):=\calC(\sV) \cap S^{3}$ is called  the {\it convex hull} of $\sV$ in $S^{3}$.  
Since $\sV$ is non-planar, $CH(\sV) \subset S^{3}$ has an interior point.  
We say that $CH(\sV)$ is the  {\it convex polyhedron} generated by $\sV$. 
The relative boundary $P=\bd Q$  of $Q=CH(\sV)$  is homeomorphic to $S^{2}$ and 
is called a {\it convex polyhedral sphere} generated by $\sV$.   
In what follows, we assume that each element $v \in \sV$ is {\it extremal} in $CH(\sV)$; 
that is, each $v \in \sV$ satisfies the condition $v \notin CH(\sV \sm \{v\})$. 
Then, each $v \in \sV$ is called a {\it vertex} of $P$,  and we also denote $\sV$ by $\sV(P)$. 
The edges and faces of $P$ are also naturally defined. 
We denote the sets of edges and faces of $P$ by $\sE(P)$ and  $\sF(P)$, respectively.  

We next define the dual polyhedron $Q^{*}$ to the convex  polyhedron $Q$: 
The {\it dual cone}  $\calC(\sV)^*$ to the cone $\calC(\sV)$ is defined by 
$$
\calC(\sV)^{*}:=\{x \in \R^{4}: \la x, y\ra \le 0 \ (\forall y \in \calC(\sV))\}. 
$$
Then, $Q^{*}=\calC(\sV)^{*} \cap S^{3}$ is called the {\it dual polyhedron} 
to the convex polyhedron $Q$, and $P^{*}=\bd (Q^{*})$  
the {\it dual polyhedral sphere} to $P=\bd Q$. 
We define subsets $f^{*},e^{*}$ and $v^{*}$ of $P^{*}$ for elements 
$f \in \sF(P)$, $e \in \sE(P)$ and $v \in \sV(P)$, respectively, as follows: 
\begin{align*}
f^{*}&:=\{x \in P^{*}: \la  x,y\ra=0 \ (\forall y \in f)\},  \\
e^{*}&:=\{x \in P^{*}: \la  x,y\ra=0 \ (\forall y \in e)\},  \\
v^{*}&:=\{x \in P^{*}: \la  x,v \ra=0\}.   
\end{align*}
\begin{lem}[\cite{Gru}, \cite{Mat}]
The dual polyhedron $Q^{*}$ to the convex polyhedron $Q=CH(\sV)$ is actually a convex polyhedron. 
Furthermore, the sets $\{f^{*} :  f \in \sF(P)\}$,  $\{e^{*} : e \in \sE(P)\}$  and $\{v^{*} : v \in \sV(P)\}$ 
defined above coincide with the set of vertices, edges and faces of 
the dual polyhedral sphere $P^{*}$ to $P$, respectively. 
In addition, we have $(Q^{*})^{*}=Q$. 
\end{lem}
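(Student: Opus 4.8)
The plan is to reduce everything to the corresponding facts about convex polytopes in Euclidean space, for which we may invoke \cite{Gru} and \cite{Mat}. Two reductions are involved: a \emph{gnomonic projection} that identifies the convex polyhedron $Q$ with an ordinary Euclidean convex polytope and thereby supplies its face structure together with the homeomorphism $P \cong S^2$, and the theory of \emph{polar (dual) cones} in $\R^4$, which is the natural setting for the dual polyhedron $Q^*$ since it is defined through the dual cone $\calC(\sV)^*$.

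First I would set up the projection. Since $\sV$ is hemispherical, choose $x_0 \in S^3$ with $\la v, x_0\ra > 0$ for every $v \in \sV$; then every nonzero element of $\calC(\sV)$ lies in the open half-space $\{\la x, x_0\ra > 0\}$, so $Q = \calC(\sV) \cap S^3 \subset \Del_{x_0}^c$. Central projection from the origin onto the affine hyperplane $H = \{x \in \R^4 : \la x, x_0\ra = 1\}$, given by $p(u) = u/\la u, x_0\ra$, is a diffeomorphism of $\Del_{x_0}^c$ onto $H \cong \R^3$. Being induced by a linear map on rays, $p$ carries the totally geodesic $2$-spheres $\Pi_\nu$ to affine planes and geodesics to straight lines, hence convex sets to convex sets, and it maps $Q$ onto the Euclidean convex hull of the finite set $\{p(v) : v \in \sV\}$. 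The non-planarity of $\sV$ makes this hull full-dimensional and the extremality hypothesis makes each $p(v)$ a genuine vertex; consequently $Q$ is a convex polyhedron whose vertices, edges, and faces correspond to those of a Euclidean $3$-polytope, and $P = \bd Q$ is homeomorphic to $S^2$.

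For the duality I would work directly in $\R^4$. The hemispherical hypothesis shows $\calC(\sV)$ is a pointed, full-dimensional polyhedral cone, so by the Weyl--Minkowski theorem its polar $\calC(\sV)^*$ is again a polyhedral cone, and it is likewise pointed and full-dimensional; translating these two properties back to the sphere shows that $Q^* = \calC(\sV)^* \cap S^3$ is hemispherical and non-planar, i.e. a convex polyhedron in the sense defined above, while $(\calC(\sV)^*)^* = \calC(\sV)$ is exactly the identity $(Q^*)^* = Q$. The remaining assertion is the standard order-reversing, dimension-complementary bijection between the faces of $\calC(\sV)$ and those of $\calC(\sV)^*$: a $k$-dimensional face $F$ corresponds to the conjugate face $\{x \in \calC(\sV)^* : \la x, y\ra = 0 \ (\forall y \in F)\}$ of dimension $4-k$. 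I would then match this with the explicit definitions: the ray over a vertex $v$ ($k=1$) is conjugate to a facet, giving the face $v^*$ of $P^*$; the $2$-dimensional cone face over an edge $e$ is conjugate to another $2$-dimensional face, giving the edge $e^*$; and the facet over a face $f$ ($k=3$) is conjugate to a ray, giving the vertex $f^*$. In each case the orthogonality condition cutting out the conjugate face is verbatim the one appearing in the definitions of $v^*$, $e^*$, $f^*$, so these sets are precisely the faces, edges, and vertices of $P^*$.

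The conceptually substantive point, and the one I would treat most carefully, is the nondegeneracy bookkeeping: that hemispherical together with non-planar for $\sV$ forces $\calC(\sV)$ to be both pointed and full-dimensional, and that these two properties pass to the polar cone so that $Q^*$ qualifies as a convex polyhedron rather than a degenerate object. Once this is in place, the face-lattice statements and the biduality $(Q^*)^*=Q$ are the standard polar-cone facts cited from \cite{Gru} and \cite{Mat}, and the only thing left to verify is the exact agreement between "conjugate face" and the orthogonality formulas defining $f^*$, $e^*$, and $v^*$.
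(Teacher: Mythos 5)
Your proposal is correct, and its first half (central projection of $\Del_{x_0}^c$ onto the affine hyperplane $\{\la x,x_0\ra=1\}$, carrying $Q$ to a full-dimensional Euclidean polytope with the right vertex set) is exactly the reduction the paper performs with $x_0=N=(0,0,0,1)$ and $H_+=\{x_4=1\}$. Where you diverge is in how the duality is imported: you stay in $\R^4$ and invoke the theory of polar cones --- pointedness and full-dimensionality of $\calC(\sV)$ from the hemispherical and non-planar hypotheses, their preservation under polarity, biduality of closed convex cones, and the order-reversing, dimension-complementary bijection between conjugate faces, matched term by term against the orthogonality formulas defining $f^*$, $e^*$, $v^*$. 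The paper instead intersects the dual cone with the \emph{opposite} affine hyperplane $H_-=\{x_4=-1\}$ and observes that under this identification $\calQ^*=\calC(\sV)^*\cap H_-$ becomes literally the classical polar body $\{x\in\R^3:\la x,q\ra\le 1\ (\forall q\in\calQ)\}$ of $\calQ$, so that every assertion of the lemma is verbatim a statement about polar duals of $3$-polytopes with the origin in the interior, as treated in the cited sections of Gr\"unbaum and Matou\v{s}ek. Your cone-level argument is slightly more self-contained and symmetric (biduality of cones is cleaner than biduality of polar bodies, which needs the origin in the interior), while the paper's choice of $H_-$ buys an exact match with the definitions used in the references, so the face-lattice statements can be cited without re-deriving the cone formalism. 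The nondegeneracy bookkeeping you flag as the substantive point is indeed the content of the hemispherical and non-planar hypotheses, and your verification of it is sound.
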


\begin{proof}
We identify $\R^{3}$ with the hyperplanes 
$H_{\pm}=\{(x_{1},x_{2},x_{3},x_{4}) \in \R^{4} : x_{4}=\pm 1\}$ in $\R^{4}$. 
Since we are assuming that $\sV$ is hemispherical and non-planar, we may assume that 
$N=(0,0,0,1) \in \R^{4}$ is an interior point of the convex polyhedron 
$Q=CH(\sV) \subset S^{3}$ and that $Q \subset \Delta_{N}^{c}$. 
Then $\calQ=\calC(\sV) \cap H_{+}$ is a bounded convex polyhedron in $\R^{3} \cong H_{+}$ 
with interior point $(0,0,0) \in \R^{3}$.  
Note that $\calQ$ corresponds to $Q \subset S^{3}$ via the radial projection. 
We also define a subset $\calQ^{*} \subset \R^{3} \cong H_{-}$ by $\calQ^{*}=\calC(\sV)^{*} \cap H_{-}$, 
which corresponds to $Q^{*} \subset S^{3}$.  
Then it can be seen that 
$$
\calQ^{*}=\{x \in \R^{3}: \la x, q\ra \le 1 \, (\forall q \in \calQ)\}
$$
holds, which is the definition adopted in \cite{Gru} and \cite{Mat} 
of the dual polyhedron $\calQ^{*}$ to a convex polyhedron $\calQ$ in $\R^{3}$. 
Then all the properties in Lemma 3.1 are obtained from the arguments 
in \cite[Section 3.4]{Gru} or \cite[Chapetr 5]{Mat}. 
\end{proof} 

Throughout this paper, for a polyhedral sphere $P$, 
we fix the orientation on each face of $P$ (resp. $P^{*}$) induced from 
the outward (resp. inward) unit normal vectors to faces of $P$ (resp. $P^{*}$). 
In other words, the orientation on a face $f$ of $P$ 
coincides with that of $\Pi_{f^{*}}$, 
and the orientation on a dual face $v^{*} \subset P^{*}$ of a vertex $v \in P$ 
coincides with that of $\Pi_{-v^{*}}$.  
 
\section{Some properties of convex polyhedral spheres}

In this section, we introduce some relations between convex polyhedral spheres and their 
dual polyhedral spheres, which are used in what follows. 

Let $P$ be a convex polyhedral sphere in $S^{3}$. 
Suppose that two faces $f_1$ and $f_2$ of $P$ meet along an edge $e$. 
Then the {\it exterior dihedral angle} of $f_{1}, f_{2}$ at $e$ is defined to be 
the spherical distance  $d(f_{1}^{*},f_{2}^{*})$ between the dual vertices 
$f_{1}^{*}$ and $f_{2}^{*}$ of $P^{*}$ to $f_{1}$ and $f_{2}$, respectively, 
or the length of the edge $e^{*}$ of $P^{*}$. 

We need the following two fundamental lemmas, and the proof for the first one is left for the reader. 
\begin{lem}
Let $P$ be a convex polyhedral sphere in $S^{3}$. 
Let $e$ be an edge of $P$ and choose an orientation of $e$ arbitrarily.    
Let $v_{1}$  (resp. $v_{2}$) be the starting (resp. terminal) point of $e$.  
Let $f_{1},f_{2}$ be the faces of $P$ that meet along $e$, and suppose that 
$e$ and $\bd f_{1}$ (resp. $\bd f_{2}$) have the  same  (resp. opposite) orientations. 
In other words, $f_{1}$ is on the left side of $e$, whereas $f_{2}$ is on the right side.
Let $e^{*}$ be the dual edge to $e$ equipped with the orientation 
associated with that of $e$ (see Section 2).  
Then, $f_{1}^{*}$ (resp. $f_{2}^{*}$) is the starting (resp. terminal) point of $e^{*}$, 
and $v_{1}^{*}$ is on the right side of $e^{*}$, whereas $v_{2}^{*}$ is on the left side 
(see Figure 2). 
\end{lem}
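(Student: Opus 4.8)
The plan is to reduce the statement to two planar computations, one in the $2$-plane $V\subset\R^4$ containing the geodesic through $e$ and one in its orthogonal complement $V^\perp$ containing the dual geodesic through $e^*$, tracking orientations throughout with the conventions of Section 2 and the normal-vector descriptions of the faces fixed at the end of Section 3. First I would fix an orthonormal basis $(u_1,u_2,u_3,u_4)$ of $\R^4$ adapted to the situation: choose $(u_1,u_2)$ spanning $V$ and inducing the given orientation of $e$ (so $e(t)=(\cos t)u_1+(\sin t)u_2$ with $v_1=e(t_1)$, $v_2=e(t_2)$, $t_1<t_2$), and choose $(u_3,u_4)$ spanning $V^\perp$ so that $(u_1,u_2,u_3,u_4)$ is positively oriented; by the definition of the associated orientation of a dual geodesic this $(u_3,u_4)$ induces the orientation of $e^*$, and $e^*(s)=(\cos s)u_3+(\sin s)u_4$. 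Since $f_1^*,f_2^*$ are orthogonal to $e$ they lie in $V^\perp$, while the relevant normals of $v_1^*,v_2^*$ lie in $V$. The key geometric input is convexity: the cross-section of the solid $Q$ by the normal plane $V^\perp$ of $e$ is the convex wedge bounded by the two rays $m_1,m_2\in V^\perp$ along which $f_1,f_2$ leave $e$, with interior angle the interior dihedral angle $\alpha=\pi-\theta$, where $\theta=d(f_1^*,f_2^*)=|e^*|$; dually, the cross-section of $Q^*$ by the normal plane $V$ of $e^*$ is the convex wedge bounded by the rays $m_1^*,m_2^*\in V$ along which $v_1^*,v_2^*$ leave $e^*$.

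For the first assertion I would work entirely in the oriented plane $V^\perp$. Using that the orientation of $f_i$ is given by its outward normal $f_i^*$ and the \emph{surface-on-the-left} boundary-orientation convention, the hypotheses ``$f_1$ on the left, $f_2$ on the right of $e$'' translate, after splitting the adapted frame into its $V$- and $V^\perp$-parts, into $f_1^*=R\,m_1$ and $f_2^*=R^{-1}m_2$, where $R$ is the quarter-turn of $V^\perp$ with $(w,Rw)$ positively oriented. Normalizing $m_1$ to angle $0$, convexity forces $m_2$ to angle $-\alpha$ (the outward normals must point out of the wedge), hence $f_1^*$ sits at angle $\pi/2$ and $f_2^*$ at angle $-\pi/2-\alpha$. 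The oriented arc $e^*$ of length $\theta=\pi-\alpha<\pi$ joining them runs in the direction of increasing angle from $f_1^*$ to $f_2^*$, so $f_1^*$ is the starting and $f_2^*$ the terminal point of $e^*$.

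For the second assertion I would run the mirror computation in the oriented plane $V$, now using that the orientation of $v_i^*$ is given by the \emph{inward} normal $-v_i$. Writing the positive basis of $v_i^*$ along $e^*$ as (boundary direction, into-face normal $m_i^*$), positivity with respect to $-v_i$ reduces, after reordering the adapted frame, to comparing $\mathrm{sgn}_V(m_i^*,-v_i)$ with $\mathrm{sgn}_{V^\perp}(q,\pm T^*)$. Convexity of the $Q^*$-wedge, together with $v_1$ at angle $t_1$ and $v_2$ at angle $t_2$ in $V$, pins the into-face rays to $m_1^*$ at angle $t_1-\pi/2$ and $m_2^*$ at angle $t_2+\pi/2$; evaluating the two signs then shows that the boundary orientation of $v_1^*$ is opposite to that of $e^*$ while that of $v_2^*$ agrees, i.e. $v_1^*$ lies on the right and $v_2^*$ on the left of $e^*$.

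The main obstacle is bookkeeping, not ideas: resolving every $\pm\pi/2$ ambiguity consistently. Two points require genuine care. First, one must verify that the \emph{surface-on-the-left} reading of ``same orientation $\Rightarrow$ left'' really matches the paper's convention, since an error there flips all of left/right. Second, the sign flip coming from the inward- rather than outward-normal orientation on the faces of $P^*$ is exactly what turns the naive dual of the first assertion (which would place $v_1^*$ on the left) into the correct conclusion that $v_1^*$ is on the right; keeping this flip straight is the crux. A useful independent check is to re-derive the second assertion from the first applied to $(P^*,e^*)$ via $(P^*)^*=P$ (Lemma 3.1), noting that the associated orientation of $(e^*)^*$ recovers that of $e$ and that the inward-normal convention on $P^*$ reverses each face orientation; the two routes must then agree.
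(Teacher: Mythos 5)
The paper gives no proof of this lemma at all---it is explicitly ``left for the reader''---so there is nothing of the author's to compare your argument against; it can only be checked on its own terms. Your proof is correct, and I verified independently that the two genuinely delicate sign determinations both come out right. For the first assertion: with $(u_1,u_2)$ a positive basis of $V$ inducing the orientation of $e$ and $(u_3,u_4)$ the associated positive basis of $V^\perp$, the outward-normal orientation of $f_i$ combined with the face-on-the-left boundary convention makes $(u_1,u_2,m_1,f_1^*)$ positive and $(u_1,u_2,m_2,f_2^*)$ negative in $\R^4$, i.e.\ $f_1^*=Rm_1$ and $f_2^*=R^{-1}m_2$ exactly as you claim; normalizing $m_1$ to angle $0$, the requirement that the wedge lie in $\{\la w,f_1^*\ra\le 0\}\cap\{\la w,f_2^*\ra\le 0\}$ forces $m_2$ to angle $-\alpha$, hence $f_1^*$ at $\pi/2$ and $f_2^*$ at $-\pi/2-\alpha$, and $e^*$ (which is precisely the arc of support normals $\pi/2\le\psi\le 3\pi/2-\alpha$) is traversed positively from $f_1^*$ to $f_2^*$. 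For the second assertion, the relevant determinant $\det(y,\tau^*,m_i^*,-v_i)$ factors after an even block permutation as the product of a $V$-determinant and a $V^\perp$-determinant; with $m_1^*$ at angle $t_1-\pi/2$ and $m_2^*$ at angle $t_2+\pi/2$ (forced by $\la m_i^*,v_j\ra\le 0$ for $j\ne i$) the product is negative for $i=1$ and positive for $i=2$, which is exactly ``$v_1^*$ on the right, $v_2^*$ on the left.'' Two small presentational defects: the symbols $q$ and $T^*$ in your sign comparison are never defined (they are evidently a base point of $e^*$ and its positive tangent), and a written-up version should state the block-determinant factorization explicitly rather than gesturing at ``reordering the adapted frame.'' Your proposed cross-check via $(Q^*)^*=Q$ is sound but optional. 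Otherwise the plan and every stated intermediate value are correct.
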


\begin{figure}[h]
\begin{center}
\includegraphics[height=4.5cm]{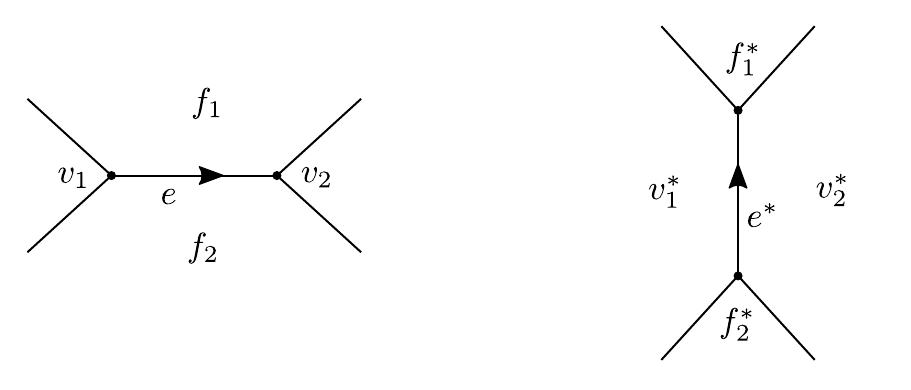}
\caption{Configurations of vertices, edges and faces of $P$ (left) and their dual entities in $P^{*}$ (right). }
\end{center}
\end{figure}

\begin{lem}[Proposition 2.7 in \cite{HR}]
Let $P$ be a convex polyhedral sphere in $S^{3}$. 
If the angle of face $f$ of $P$ at vertex $v$ is $\theta$, then 
the angle of face $v^{*}$ of $P^{*}$ at vertex $f^{*}$ is equal to $\pi-\theta$. 
\end{lem}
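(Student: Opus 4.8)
The plan is to reduce the statement to the classical polar duality of convex spherical polygons, performed entirely inside the $2$-sphere attached to the vertex $v$. Identify $T_vS^3$ with $v^\perp\subset\R^4$ and set $S^2_v:=\{u\in v^\perp:\la u,u\ra=1\}$, which is exactly the hyperplane $\Pi_v=v^\perp\cap S^3$. Inside $S^2_v$ I will place two convex spherical polygons — the link $L$ of $v$ in $Q$ and the dual face $v^*$ — and show that they are polar dual to one another; the angle relation $\pi-\theta$ will then be the standard angle/length duality for polar polygons.

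First I would describe the link. By Lemma 3.1 the facet normals of the cone $\calC(\sV)$ are precisely the dual vertices $f^*$, so $Q=\{x\in S^3:\la x,f^*\ra\le 0 \text{ for all } f\in\sF(P)\}$. Intersecting the tangent cone of $Q$ at $v$ with $S^2_v$ and using that the constraints active at $v$ are those with $f\ni v$, one gets $L=\{u\in S^2_v:\la u,f^*\ra\le 0 \text{ for all } f\ni v\}$, a convex spherical polygon. The edge of $L$ lying on the great circle $\{\la u,f^*\ra=0\}$ consists of the unit directions at $v$ that point into the face $f$; this edge is the arc of directions pointing into $f$ at $v$, so its length is exactly the interior angle $\theta$ of $f$ at $v$, and its pole is $f^*$.

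Next I would identify $v^*$. Since $v^*=P^*\cap\Pi_v=Q^*\cap S^2_v$ and $Q^*=\{x\in S^3:\la x,w\ra\le 0 \text{ for all } w\in\sV\}$ by the definition of the dual cone, the polygon $v^*$ is the spherical convex hull in $S^2_v$ of the points $\{f^*:f\ni v\}$. Comparing this with the half-space description of $L$ shows that $L$ and $v^*$ are mutually polar dual in $S^2_v$: the vertices $f^*$ of $v^*$ are exactly the poles of the edges of $L$, while Lemma 4.1 matches each edge $e^*$ of $v^*$ (joining $f^*$ to the adjacent $g^*$) with the great circle polar to the corresponding vertex of $L$. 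Applying the elementary fact that the interior angle of a polar dual polygon at the vertex polar to an edge $E$ equals $\pi$ minus the length of $E$, and taking $E$ to be the edge of $L$ of length $\theta$ whose pole is $f^*$, I conclude that the interior angle of $v^*$ at $f^*$ is $\pi-\theta$.

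I expect the main obstacle to be the middle step: verifying cleanly that $v^*$ is the polar dual of $L$ itself — rather than its antipode or mirror image — and that the vertex/edge-pole correspondence is the asserted one, which is precisely where the orientation conventions of Sections 2–3 and the incidence data of Lemma 4.1 must be used consistently. As a self-contained alternative avoiding any appeal to polar duality, one can fix an orthonormal frame $v,f^*,a,b$ of $\R^4$ with $\Span(a,b)=\{v,f^*\}^\perp$, write the two edges of $f$ at $v$ as unit tangents $u_1,u_2\in\Span(a,b)$ with $\la u_1,u_2\ra=\cos\theta$, and check that the initial tangents of the two edges of $v^*$ at $f^*$ are obtained from $u_1,u_2$ by quarter turns in $\Span(a,b)$ of \emph{opposite} sign; their inner product is then $-\cos\theta$, giving the angle $\pi-\theta$. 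Pinning down that the two quarter turns have opposite signs — they do, because at $f^*$ one edge of $v^*$ enters and the other leaves — is the concrete form of the same obstacle.
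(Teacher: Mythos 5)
Your argument is correct. Note that the paper itself gives no proof of this lemma at all --- it is quoted as Proposition 2.7 of Hodgson--Rivin \cite{HR} --- so there is no in-paper argument to compare against; what you have written is a legitimate self-contained proof, and it follows essentially the standard route (also the one underlying \cite{HR}): identify the link $L$ of $v$ in $Q$ and the dual face $v^{*}$ as mutually polar convex polygons in the $2$-sphere $\Pi_{v}=v^{\perp}\cap S^{3}$, then invoke the angle/length duality for polar spherical polygons. The two structural steps check out: since $\calC(\sV)=\{x:\la x,f^{*}\ra\le 0\ \forall f\}$ by biduality, the active constraints at $v$ give $L=\{u\in\Pi_{v}:\la u,f^{*}\ra\le 0\ \forall f\ni v\}$, whose facet on the great circle polar to $f^{*}$ is the arc of inward directions into $f$, of length $\theta$; and $v^{*}$ is the spherical hull of $\{f^{*}:f\ni v\}$ because those are exactly the vertices of $P^{*}$ incident to the face $v^{*}$, so $L$ and $v^{*}$ are polar duals of one another in $\Pi_{v}$ (this is just cone biduality inside $v^{\perp}\cong\R^{3}$, so the "antipode/mirror" worry you raise resolves itself: the pole of the edge of $L$ on the side away from $L$ is $+f^{*}$, which is precisely the vertex of $v^{*}$). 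The only point where a referee would ask for one more line is the quoted "elementary fact" that the angle of the polar polygon at the pole of an edge $E$ is $\pi-|E|$; since that fact is the two-dimensional core of the lemma itself, you should either prove it (the angle at $p=f^{*}$ between the great circles polar to the endpoints $a,b$ of $E$, measured on the side of $v^{*}$, is $\pi-d(a,b)$) or fall back on your explicit frame computation with the two oppositely-signed quarter turns, which does the same sign check concretely. Either way the proof is complete in substance.
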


Using Lemma 4.2, we have the following: 
\begin{lem}
Let $P$ be a convex polyhedral sphere and $P^{*}$ its dual polyhedral sphere.  
Then, we have 
$\Area(P)+\Area(P^{*})=4 \pi$. 
\end{lem}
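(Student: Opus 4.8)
The plan is to prove $\Area(P)+\Area(P^{*})=4\pi$ by computing each area via the spherical Gauss--Bonnet theorem (angle-excess formula) and exploiting the duality relation from Lemma 4.2 to make the angle contributions cancel. A geodesic $n$-gon on the unit $2$-sphere with interior angles $\alpha_{1},\dots,\alpha_{n}$ has area $\sum_{k}\alpha_{k}-(n-2)\pi$. Since each face of $P$ is isometric to such a spherical polygon, I would sum this excess over all faces of $P$, then separately over all faces of $P^{*}$, and add the two totals. Throughout I will write $V,E,F$ for the numbers of vertices, edges, and faces of $P$; by Lemma 3.1 the dual sphere $P^{*}$ has $F$ vertices, $E$ edges, and $V$ faces, and each edge $e$ of $P$ corresponds to a dual edge $e^{*}$ of $P^{*}$.

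\medskip

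First I would organize the sum $\Area(P)=\sum_{f\in\sF(P)}\bigl(\sum_{v\in f}\angle_{f}(v)-(n_{f}-2)\pi\bigr)$, where $\angle_{f}(v)$ denotes the interior angle of face $f$ at vertex $v$ and $n_{f}$ is the number of sides of $f$. Writing $\sum_{f}n_{f}=2E$ (each edge borders two faces) turns the constant part into $\Area(P)=\Sigma_{P}-2\pi E+2\pi F$, where $\Sigma_{P}=\sum_{f}\sum_{v\in f}\angle_{f}(v)$ is the total angle sum of all faces of $P$. I would do the identical computation for $P^{*}$, obtaining $\Area(P^{*})=\Sigma_{P^{*}}-2\pi E+2\pi V$, using that $P^{*}$ has $E$ edges, $V$ faces, and that the number of faces of $P^{*}$ equals $V$.

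\medskip

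The key step is to pair up the angle sums using Lemma 4.2. Each interior angle of $P$ is an angle $\theta=\angle_{f}(v)$ of some face $f$ at some vertex $v$; Lemma 4.2 tells me the corresponding angle of the dual face $v^{*}$ of $P^{*}$ at the dual vertex $f^{*}$ equals $\pi-\theta$. This gives a bijection between the angle-incidences of $P$ (pairs (face, vertex) with $v\in f$) and those of $P^{*}$ (pairs (face $v^{*}$, vertex $f^{*}$)), under which paired angles sum to $\pi$. Hence $\Sigma_{P}+\Sigma_{P^{*}}=\pi\cdot(\text{number of such incidences})=\pi\cdot\sum_{f}n_{f}=2\pi E$, since the number of (face, vertex) incidences equals $\sum_{f}n_{f}=2E$. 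Substituting,
\begin{equation*}
\Area(P)+\Area(P^{*})=2\pi E-4\pi E+2\pi(F+V)=2\pi(V-E+F).
\end{equation*}
Finally, because $P$ is homeomorphic to $S^{2}$, Euler's formula gives $V-E+F=2$, and the result $4\pi$ follows.

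\medskip

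I expect the main obstacle to be verifying that the incidence bijection is exact and global, not merely local. Specifically, I must confirm that (face, vertex) incidences of $P$ biject with (face, vertex) incidences of $P^{*}$ in a way fully compatible with the $v\leftrightarrow v^{*}$, $f\leftrightarrow f^{*}$ duality of Lemma 3.1, so that \emph{every} angle of $P^{*}$ is accounted for exactly once as $\pi-\theta$ for a unique angle $\theta$ of $P$. This amounts to checking that $v\in f$ holds if and only if $f^{*}$ is a vertex of the dual face $v^{*}$, which is precisely the incidence-reversing property guaranteed by Lemma 3.1; once that correspondence is in hand the counting is routine. A minor secondary point is to ensure the angle-excess formula applies to each (possibly non-convex-looking but geodesically convex) spherical face, which is immediate since faces of convex polyhedral spheres are geodesic polygons contained in hemispheres.
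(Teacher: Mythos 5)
Your proof is correct and follows essentially the same route as the paper: the spherical angle-excess (Gauss--Bonnet) formula for each face, the angle duality $\theta \leftrightarrow \pi-\theta$ from Lemma 4.2, and Euler's formula $V-E+F=2$. The only difference is bookkeeping --- you symmetrize by showing $\Sigma_{P}+\Sigma_{P^{*}}=2\pi E$ directly, whereas the paper computes $\Area(P)$ in two ways and adds the dual identity --- but the two computations are equivalent.
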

\begin{proof}
Suppose that $P$ has $l$ faces,  $m$ edges and $n$ vertices, 
and let $\sF(P)=\{f_{1}, \ldots, f_{l}\}$ be the set of faces of $P$. 
Since $P$ is homeomorphic to the $2$-sphere, we have $l-m+n=2$. 
Suppose that face $f_{i}$ is an $a_{i}$-gon ($i=1, \ldots, l$). 
Then, $2m=\sum_{i=1}^{l}a_{i}$ holds. 
Let $\{v_{ij} : 1 \le j \le a_{i}\}$ be the set of vertices of $f_{i}$, and let
$\theta_{ij}$ be the angle of $f_{i}$ at $v_{ij}$. 
The Gauss-Bonnet theorem then implies that 
\begin{align}
\Area(f_{i})=2\pi-\sum_{j=1}^{a_{i}} (\pi-{\theta_{ij}}). 
\end{align}
From Lemma 4.2, it can be seen that  $\sum_{j=1}^{a_{i}} (\pi-{\theta_{ij}})$ is the sum of 
all angles in $P^{*}$ that meet at vertex $f_{i}^{*}$, i.e., the cone angle of $P^{*}$ at $f_{i}^{*}$. 
Therefore, by summing (4.1) over $i$, we obtain 
\begin{align*}
\Area(P)=2l\pi -\Theta(P^{*}), 
\end{align*}
where $\Theta(P^{*})$ is the sum of all angles of all faces of $P^{*}$. 
By duality, we have 
\begin{align}
\Area(P^{*})=2n\pi -\Theta(P), 
\end{align}
where $\Theta(P)$ is the sum of all angles of all faces of $P$. 

On the other hand, by summing (4.1) over $i$ again, we obtain 
\begin{align}
\Area(P)&=2l\pi-\sum_{i=1}^{l}{a_{i}}\pi+\sum_{i=1}^{l}\sum_{j=1}^{a_{i}} {\theta_{ij}}.  
\end{align}
 Since $\sum_{i=1}^{l}a_{i}=2m$, it follows from (4.3) that 
 \begin{align}
\Area(P)=2(l-m)\pi+\Theta(P). 
\end{align}
By summing (4.2) and (4.4) and using $l-m+n=2$,  we obtain $\Area(P)+\Area(P^{*})=4 \pi$. 
\end{proof}

\section{Outward unit normal vectors}

Let $P$ be a convex polyhedral sphere. 
In this section, we consider the subset $\wt P$ of the unit tangent bundle $T^{1} S^{3}$ of $S^{3}$  
that consists of outward unit normal vectors to $P$ and 
equip $\wt P$ a piecewise smooth Riemannian metric without cone-like singular points. 

Let $P$ be a convex polyhedral sphere in $S^{3}$. 
A hyperplane $\Pi_{\nu} \,(\nu \in S^{3})$ is said to be 
a {\it support plane} for $P$  at $x \in P$ if both 
$P \subset \Del_{\nu}$ and $x \in \Pi_{\nu} \cap P$ are satisfied. 
Note that $\Pi_{\nu}$ is a support plane for $P$ at $x \in P$ if and only if 
$\nu \in P^{*}$ and $d(x,\nu)=\pi/2$.  
When $\Pi_{\nu}$ is a support plane for $P$ at $x$, 
by abuse of language, we say that 
$\nu$ is an {\it outward unit normal vector} to $P$ at $x$; 
although, if $x$ lies on an edge or a vertex of $P$, it is not uniquely determined.  
Let us denote by $\wt P$ the set of outward unit normal vectors to $P$: 
\begin{align*}
\wt P
&:=\{(x,\nu) \in T^{1} S^{3} : 
x \in P, \, \nu \text{ is an outward unit normal vector to } P \text{ at }x \}\\
&=\{(x,\nu) \in T^{1}S^{3} : x \in P,\,\nu \in P^{*},\, d(x,\nu)=\pi/2\}. 
\end{align*}
We equip $\wt P$ with the topology induced from that of $T^{1}S^{3}$. 
Later, we prove later that $\wt P$ is homeomorphic to $S^{2}$. 

Let $\sF(P)$, $\sE(P)$ and $\sV(P)$ be the sets of faces, edges and vertices of $P$,  respectively. 
For elements $f \in \sF(P)$, $e \in \sE(P)$ and  $v \in \sV(P)$, 
we define subsets $\tilde f$, $\tilde e$ and $\tilde v$ of $\wt P$, respectively, as follows: 
\begin{align*}
\tilde f&:=f \times \{f^{*}\}=\{(x,f^{*}) \in T^{1}S^{3}: x \in f\},  \\
\tilde e&:=e \times e^{*}=\{(x,\nu) \in T^{1}S^{3} : x \in e,\,\nu \in e^{*}\}, \\
\tilde v&:=\{v\} \times v^{*}=\{(v,\nu) \in T^{1}S^{3} : \nu \in v^{*}\}. 
\end{align*}
It is easily seen that these subsets cover the whole  $\wt P$: 
\begin{align*}
\wt P=\left(\bigcup_{f \in \sF(P)} \tilde f \right) \cup 
\left(\bigcup_{e \in \sE(P)}\tilde e \right) \cup 
\left(\bigcup_{v \in \sV(P)} \tilde v \right). 
\end{align*}
We now equip a piecewise smooth, continuous Riemannian metric on $\wt P$. 
We put an orientation and a Riemannian metric on $\tilde f$ (resp. $\tilde v$) so that the canonical 
homeomorphism $f \to \tilde f; \, x \mapsto (x,f^{*})$ 
(resp. $v^{*} \to \tilde v; \, \nu \mapsto (v,\nu)$) is an orientation preserving isometry.  
We next put an orientation and a Riemannian metric on $\tilde e$. 
Let us first choose an orientation of $e$ arbitrarily.  
Then, the orientation of the dual-edge $e^{*}$ is canonically determined, 
as explained in Section 2. 
Let $e(s)\ (0 \le s \le a)$ and $e^{*}(t)\ (0 \le t \le b)$ 
be the  arc length parametrizations of $e$ and  $e^{*}$ that  are 
consistent with the orientations on $e$ and $e^{*}$, respectively.  
Then, we put an orientation and Riemannian metric on $\tilde e$ so that the canonical 
homeomorphism 
$$
\R^{2} \supset [0,a] \times [0,b] \ni (s,t) \mapsto (e(s),e^{*}(t)) \in \tilde e
$$
is  an orientation-{\it reversing} isometry, 
where $\R^{2}$ is equipped with the canonical Euclidean metric. 
Thus, we obtain spherical polygons $\{\tilde f : f \in \sF(P)\}$ and 
$\{\tilde v : v \in \sV(P)\}$, and planner rectangles $\{\tilde e : e \in \sE(P)\}$ in $\wt P$. 
We denote by  $\Poly(\wt P)$ the set of these polygons in $\wt P$. 
Then, we have the following: 
\begin{prop}
Let $P$ be a convex polyhedral sphere in $S^{3}$. 
The set $\wt P$ of the outward unit normal vectors to $P$ is 
tiled by all elements in $\Poly(\wt P)$.  This tiling is edge-to-edge and 
the orientations on the polygons in $\Poly(\wt P)$ fit together to give an orientation on $\wt P$. 
In addition, $\wt P$ is homeomorphic to $S^{2}$. 
\end{prop}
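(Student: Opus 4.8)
The plan is to realize $\wt P$ explicitly as the polyhedral complex whose cells are the members of $\Poly(\wt P)$, and then identify it as $S^{2}$ by an Euler characteristic count together with orientability. First I would exhibit the open-cell decomposition directly: a pair $(x,\nu)\in\wt P$ satisfies exactly one of (i) $x$ interior to a face $f$, which forces $\nu=f^{*}$ and gives the open cell $\mathrm{int}(f)\times\{f^{*}\}=\mathrm{int}(\tilde f)$; (ii) $x$ interior to an edge $e$ with $\nu$ interior to the dual edge $e^{*}$, giving $\mathrm{int}(\tilde e)=\mathrm{int}(e)\times\mathrm{int}(e^{*})$; (iii) $x$ a vertex $v$ with $\nu$ interior to the dual face $v^{*}$, giving $\mathrm{int}(\tilde v)=\{v\}\times\mathrm{int}(v^{*})$; together with the lower-dimensional cells on the common boundaries. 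Since each of $\tilde f,\tilde e,\tilde v$ embeds in $T^{1}S^{3}$ and distinct cells overlap only in the prescribed boundary pieces, these cells genuinely tile $\wt P$, which settles the covering and disjoint-interior claims.

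Next I would record the adjacencies. The four sides of a rectangle $\tilde e=e\times e^{*}$ are $e\times\{f_{1}^{*}\}$, $e\times\{f_{2}^{*}\}$, $\{v_{1}\}\times e^{*}$, $\{v_{2}\}\times e^{*}$, where $f_{1},f_{2}$ are the faces along $e$ and $v_{1},v_{2}$ its endpoints; the first two are shared with $\tilde f_{1},\tilde f_{2}$ and the last two with $\tilde v_{1},\tilde v_{2}$. Two distinct face-cells, or two distinct vertex-cells, are disjoint, and a face-cell meets a vertex-cell in at most the single point $(v,f^{*})$; hence every positive-dimensional intersection of two tiles is a full common edge, which is precisely the edge-to-edge property. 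Counting cells then gives $F=l+m+n$ two-cells, $E=4m$ one-cells (the four sides of each of the $m$ rectangles, no two rectangles sharing a side, and each side borders exactly one face- or vertex-cell), and $V=2m$ zero-cells (the incident pairs $(v,f^{*})$, of which there are $\sum_{f}a_{f}=2m$). Thus $\chi(\wt P)=(l+m+n)-4m+2m=l+n-m=2$ by Euler's formula $l-m+n=2$ for $P$.

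I would then confirm $\wt P$ is a closed surface by examining the link of each zero-cell $(v,f^{*})$: exactly the four tiles $\tilde f,\tilde e,\tilde v,\tilde e'$ surround it, where $e,e'$ are the two edges of $f$ at $v$, and by the adjacencies above they close into the single cycle $\tilde f\to\tilde e\to\tilde v\to\tilde e'\to\tilde f$, so the link is a circle and $\wt P$ is a $2$-manifold. Here Lemma 4.2 enters pleasantly: the angles at $(v,f^{*})$ are the angle $\theta$ of $f$ at $v$, the angle $\pi-\theta$ of $v^{*}$ at $f^{*}$, and two right angles from the rectangles, summing to exactly $2\pi$, so the metric carries no cone-like singularity. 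A closed orientable surface with $\chi=2$ is $S^{2}$, so it remains only to produce the global orientation.

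The hard part will be checking that the orientations fit together, i.e.\ that on each shared edge the two adjacent tiles induce opposite boundary orientations. I would verify this edge by edge using Lemma 4.1 and the sign conventions of Sections 2 and 3: with $e$ oriented from $v_{1}$ to $v_{2}$ and $f_{1}$ on its left, Lemma 4.1 fixes the orientation of $e^{*}$ and the sides on which $f_{1}^{*},f_{2}^{*},v_{1}^{*},v_{2}^{*}$ lie, while the \emph{orientation-reversing} parametrization $[0,a]\times[0,b]\to\tilde e$ is exactly calibrated so that the boundary orientation of $\tilde e$ opposes that of $\tilde f_{1}$ on $e\times\{f_{1}^{*}\}$ and that of $\tilde v_{1}$ on $\{v_{1}\}\times e^{*}$, and likewise for $f_{2}$ and $v_{2}$. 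The only genuine delicacy is keeping the left/right conventions of Lemma 4.1 straight against the fact that faces of $P$ carry outward normals while faces of $P^{*}$ carry inward normals; once this bookkeeping is done, the local orientations agree on every overlap and patch to a global orientation, completing the identification $\wt P\cong S^{2}$.
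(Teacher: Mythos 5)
Your proposal is correct, and most of its skeleton coincides with the paper's: the same cell decomposition of $\wt P$ into the polygons of $\Poly(\wt P)$, the same pairwise-intersection analysis yielding the edge-to-edge property, the same use of Lemma 4.1 to check that boundary orientations of adjacent tiles oppose each other along shared edges, and the same use of Lemma 4.2 to see that the four tiles around a vertex $(v,f^{*})$ contribute angles $\theta$, $\pi/2$, $\pi-\theta$, $\pi/2$ summing to $2\pi$. Where you genuinely diverge is the last step: you get $\chi(\wt P)=2$ by a direct combinatorial count ($l+m+n$ two-cells, $4m$ one-cells, $2m$ zero-cells via $\sum_{f}a_{f}=2m$) together with Euler's formula $l-m+n=2$ for $P$, whereas the paper applies the Gauss--Bonnet theorem to the piecewise smooth metric on $\wt P$ and evaluates $\int_{\wt P}K\,\vol=\Area(P)+\Area(P^{*})=4\pi$ using Lemma 4.3. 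Your route is more elementary and makes Lemma 4.3 dispensable for this proposition; the paper's route reuses the area identity it has already proved and stays within the metric framework the rest of the argument lives in. One point both treatments leave implicit: deducing $\wt P\cong S^{2}$ from $\chi(\wt P)=2$ needs connectedness of $\wt P$ (a disjoint union of a sphere and a torus also has Euler characteristic $2$); this is immediate because the projection $\wt P\to P$ is surjective with connected fibres and $P$ is connected, but it deserves a sentence in either proof.
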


\begin{proof}
As observed above, $\wt P$ is  covered by polygons in $\Poly(\wt P)$; i.e., 
$\wt P=\bigcup_{\wp \in \Poly(P)}\wp$.  
To prove the proposition, we examine the intersection of two polygons in  $\Poly(\wt P)$ 
in the following six cases (i)--(vi): 
\begin{enumerate}[(i)]
\item For any two distinct elements $f_{1} ,f_{2} \in \sF(P)$, $\tilde f_{1} \cap \tilde f_{2} =\emptyset$. 
\item For any two distinct elements $v_{1} ,v_{2} \in \sV(P)$, $\tilde v_{1} \cap \tilde v_{2} =\emptyset$. 
\item For $f \in \sF(P)$ and $e \in \sE(P)$, 
$\tilde f \cap \tilde e \ne \emptyset$ if and only if  $e$ is an edge of $f$. 
In this case, $\tilde f \cap \tilde e$ is equal to the set 
$e \times \{f^{*}\}=\{(x,f^{*}): x \in e\}$, 
which is an edge of $\tilde f$ and $\tilde e$. 
\item For $e \in \sE(P)$ and $v \in \sV(P)$, 
$\tilde e  \cap \tilde v\ne \emptyset$ if and ponly if 
$v$ is an end point of $e$. 
In this case, $\tilde e  \cap \tilde v$ is equal to the set 
$\{v\} \times e^{*}=\{(v,\nu): \nu \in e^{*}\}$,  
which is an edge of $\tilde e$ and $\tilde v$. 
\item For $f \in \sF(P)$ and $v \in \sV(P)$, 
$\tilde f \cap \tilde v \ne \emptyset$ if and only if $v$ is  a vertex of $f$. 
In this case, $\tilde f \cap \tilde v$ is the point $(v, f^{*}) \in \wt P$, 
which is a vertex of $\tilde f$ and $\tilde v$. 
\item For any two distinct elements $e_{1}, e_{2} \in \sE(P)$, 
$\tilde e_{1} \cap \tilde e_{2} \ne \emptyset$ if and only if both $e_{1}$ and $e_{2}$ are 
edges of the same face $f \in \sF(P)$ and have a common end point $v \in \sV(P)$. 
In this case, $\tilde e_{1} \cap \tilde e_{2}$ is the point $(v, f^{*}) \in \wt P$, 
which is a vertex of $\tilde e_{1}$ and $\tilde e_{2}$. 
\end{enumerate}
From the above observations (i)--(vi), it can bee seen that polygons in $\Poly(\wt P)$ 
have mutually disjoint interiors. 
It can also be seen from (iii) and (iv) that for any edge $\vep$ of any polygon $\wp$ in $\Poly(\wt P)$, 
there exists exactly one polygon $\wp' ( \ne \wp)$ in $\Poly(\wt P)$  
such that $\vep$ is also an edge of $\wp'$ and that $\wp$ and $\wp'$ are glued along $\vep$. 
In addition, the orientations on $\wp$ and $\wp'$ fit together. 
We explain this only for the case where $\wp=\tilde f$ and $\wp'=\tilde e$ for some 
$f \in \sF(P)$ and its edge $e$ (see case (iii)). 
In this case,  $\tilde f$ and $\tilde e$ are glued along $e \times \{f^{*}\}$ in an edge-to-edge manner.   
We need to show that if we equip $\bd \tilde f$ and $\bd \tilde e$ with the orientations 
induced from that of $\tilde f$ and $\tilde e$, respectively, 
the orientations restricted to $e \times \{f^{*}\} \subset \bd \tilde f$ and 
 $e \times \{f^{*}\} \subset \bd \tilde e$ are the opposite. 
 This can be seen as follows:  
 Let us choose an orientation of $e$ arbitrary,  and equip $e \times \{f^{*}\}$ the induced orientation. 
 When the orientation of $e$ agrees with that of $\bd f$, 
 by Lemma 4.1, $f^{*}$ is the starting point of 
 $e^{*}$, and hence, the orientation of $e \times \{f^{*}\}$ is opposite to that of $\bd \tilde e$. 
 The argument for the case when the orientation of $e$ is opposite to that  of $f$ is similar. 

We next observe the configurations of the polygons in $\Poly(\wt P)$ at their vertices. 
Note that any vertex of any polygon in $\Poly(\wt P)$ is of the form 
$(v,f^{*})$ for some $f \in \sF(P)$ and some vertex $v$ of $f$. 
Let $e_{1}$ and $e_{2}$ be edges of $f$ that have a common end point $v$.  
Then, the polygons adjacent to point $(v,f^{*})$ in $\wt P$ are exactly $4$-polygons 
$\tilde f, \tilde e_{1}, \tilde v$ and $\tilde e_{2}$. 
Furthermore, by observing how these polygons are glued along their edges, 
it can be seen that  these polygons come together to make a neighborhood of the point $(v,f^{*})$ 
(see Figure 3). 
Moreover, it follows from Lemma 4.2 that the angles of polygons 
$\tilde f, \tilde e_{1}, \tilde v$ and $\tilde e_{2}$ at vertex $(v,f^{*})$ add up to $2\pi$. 

\begin{figure}[h]
\begin{center}
\includegraphics[height=4cm]{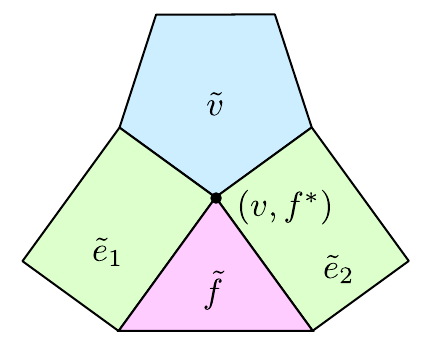}
\caption{Polygons in $\Poly(\wt P)$ that  are adjacent to point $(v,f^{*})$. }
\end{center}
\end{figure}

From the above arguments, it can be concluded that $\wt P$ is homeomorphic to an 
orientable $2$-dimensional topological manifold without boundaries. 
Moreover, $\wt P$ has a piecewise smooth, continuous metric without cone-like singular points.  

Finally, we show that $\wt P$ is homeomorphic to $S^{2}$. 
Since $\wt P$ is tiled by geodesic polygons and has no cone-like singular point, 
we can apply the Gauss-Bonnet theorem to $\wt P$ to obtain 
\begin{align}
\int_{\wt P}K \,\vol=2 \pi\, \chi(\wt P), 
\end{align}
where $K$ is the Gauss curvature defined on the interiors of the polygons in $\Poly(\wt P)$, 
and $\chi(\wt P)$ is the Euler number of $\wt P$. 
Since $K \equiv 1$ on $\tilde f$ for $f \in \sF(P)$ and on $\tilde v$ for $v \in \sV(P)$,
 and $K \equiv 0$ on $\tilde e$ for $e \in \sE(P)$,  
the left-hand side of (5.1) is equal to $\Area(P)+\Area(P^{*})$, 
which in turn is equal to $4\pi$ by Lemma 4.3. 
Therefore we have $\chi(\wt P)=2$,  which implies that $\wt P$ is homeomorphic to $S^{2}$. 
\end{proof}

We remark that the metric $\wt P$ coincides with the induced metric 
from the Sasaki metric on $TS^{3}$.    

\section{Identification of $S^{3}$ with $SU(2)$. }

In the proof of the main theorem, 
we make use of the Lie group structure on $S^{3}$.  
Therefore, in this section, we explain the identification of $S^{3}$ with the Lie group $SU(2)$. 
In addition, we introduce the left and right Maurer-Cartan forms on $SU(2)$.  

We set elements 
$\one, \mi,\mj,\mk$ in the algebra $M(2,{\C})$ of $2 \times 2$ complex matrices as 
\begin{align*}
\one=\left(\begin{array}{cc}
 1 & 0  \\
 0 &  1
\end{array}\right), \quad
\mi=\left(\begin{array}{cc}
 i & 0  \\
 0 &  -i
\end{array}\right), \quad
\mj=\left(\begin{array}{cc}
 0 & 1  \\
 -1 &  0
\end{array}\right), \quad
\mk=\left(\begin{array}{cc}
 0 & i  \\
 i &  0
\end{array}\right). 
\end{align*}
We then obtain a matrix model $\matH$ for the algebra of quaternions $\bbH$ as 
\begin{align*}
\bbH_{\mathrm{mat}}
=\Span_{\R}(\one, \mi,\mj,\mk)
=\left\{\left(\begin{array}{cc}
 \alp & \beta  \\
 -\bar \beta & \bar \alp 
\end{array}\right) : \alp,\beta \in \C\right\}. 
\end{align*}
We equip $\matH$ with an inner product 
\begin{align}
\la x,y\ra=\frac{1}{2}\tr( x y^{*}) \quad (x,y \in \matH), 
\end{align}
where $y^{*}$ is the Hermitian conjugate of $y$. 
In particular, we have  $\la x,x\ra=\det x$ for $x \in \matH$.  
Then, the map $\R^{4} \to \matH$ defined by 
\begin{align*}
\R^{4} \ni (x_{1},x_{2},x_{3},x_{4}) 
\longmapsto x_{1} \one+x_{2}\mi+x_{3}\mj+x_{4}\mk \in \matH
\end{align*}
 is bijective and isometric.  
 We identify $\matH$ with  $\R^{4}$  by this map. 
Then, the group 
\begin{align*}
SU(2)
&=\{x \in M(2,\C): xx^{*}=\one, \, \det x=1\}\\
&=\{x \in \matH : \la x,x\ra=1\}
\end{align*}
is identified with $S^{3} \subset \R^{4}$, 
and the tangent space $T_{\one} S^{3}$ of $S^{3}=SU(2)$ at $\one$ is identified with 
the Lie algebra $\su(2)$ of $SU(2)$: 
\begin{align*}
\su(2)=\{x \in M(2,\C) : x+x^{*}=O,\, \tr x=0\}=\Span_\R(\mi,\,\mj,\,\mk)=\one^{\perp}. 
\end{align*}
We remark that the restriction of the inner product (6.1) to $\su(2)$ coincides with 
the Killing form on $\su(2)$ up to some negative multiplicative constant. 
Let us denote by $\Si$ the unit sphere in $\su(2)$: 
$$
\Si:=\su(2)\cap SU(2)=\{x \in \su(2): \la x,x\ra=1\}. 
$$
Note that $\Si$ can be regarded as the set of pure unit quaternions; 
i.e.,  we have $x^{2}=-\one$ for all $x \in \Si$. 
We equip $\Si$ with the orientation so that its unit normal vector is $-\one$. 
In other words, we identify $\Si$ with $\Pi_{-\one}$ as oriented hyperplanes. 

 The tangent bundle $TS^{3}$ of $S^{3}=SU(2)$ can be written as 
\begin{align*}
TS^{3}=\{(x, \nu) \in \matH \times \matH : \la x,x\ra=1, \, \la x,\nu\ra=0\}. 
\end{align*}
The {\it left and right Maurer-Cartan forms} $\om$, $\om':TS^{3} \to \su(2)$ 
are defined as follows: 
\begin{align*}
 \om(x,\nu):=x^{-1}\nu,  \quad \om'&:TS^{3} \to \su(2). 
\end{align*}
The restrictions of $\om$ (resp. $\om'$) to 
the unit tangent bundle 
\begin{align*}
T^{1}S^{3}
&=\{(x, \nu) \in \matH \times \matH : \la x,x\ra=\la \nu,\nu\ra=1, \,
\la x,\nu\ra=0\} \\
&=\{(x, \nu) \in SU(2) \times SU(2) : \la x,\nu\ra=0\}
\end{align*}
of $S^{3}  = SU(2)$ is also denoted by $\om$ (resp. $\om'$). 
Note that the maps $\om,\om':T^{1}S^{3} \to \su(2)$ have images in $\Si=\su(2) \cap SU(2)$. 
Observe that since $g^{2}=-\one$ for every $g \in \Si$, we have
\begin{align}
\om(x,\nu)&=x^{-1}\nu=-\nu^{-1}x, \\
\quad \om'(x,\nu)&=\nu x^{-1}=-x\nu^{-1}
\end{align}
for every $(x,\nu) \in T^{1}S^{3}$. 

\begin{rem}
The space $\calG(S^{3})$ of oriented closed geodesics on $S^{3}$ can be identified with 
the quotient space $T^{1}S^{3}/\sim$,  where $\sim$ is an equivalent relation in $T^{1}S^{3}$ 
defined by $(x,v) \sim (x',v')$ if and only if  $x'=(\cos t) x+(\sin t) v$ and 
$v'=-(\sin t) x+(\cos t) v$ for some $t \in \R$. 
It can easily seen that the map $\om \times \om':T^{1}S^{3} \to \Si \times \Si$ induces a bijective map
$\calG(S^{3}) \to \Si \times \Si$. 
This gives a concrete expression of the identification 
$\calG(S^{3}) \cong \Gr_{2}(\R^{4}) \cong S^{2} \times S^{2}$, 
where $\Gr_{2}(\R^{4})$ is the Grassmannian manifold of 
oriented $2$-dimensional subspaces in $\R^{4}$. 
\end{rem}

\begin{lem}
Given a hyperplane $\Pi_{\nu} \,(\nu \in S^{3})$ of $S^{3}$, 
the maps $\Gamma, \Gamma':\Pi_{\nu} \to \Si$ defined by 
\begin{align*}
\Gamma(x):=\om(x,\nu), \quad \Gamma'(x):=\om'(x,\nu)
\end{align*}
are orientation preserving isometries. 
\end{lem}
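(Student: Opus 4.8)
The plan is to prove the statement for $\Gamma$ by computing its differential explicitly and recognizing it as the restriction of an orientation-preserving isometry of the ambient $\R^{4}=\matH$; the argument for $\Gamma'$ is then identical after exchanging the sides of multiplication. First I would note that for $x\in\Pi_{\nu}$ we have $(x,\nu)\in T^{1}S^{3}$, so that $\Gamma(x)=\om(x,\nu)=x^{-1}\nu$ indeed lies in $\Si$ and $\Gamma$ is well defined. Differentiating along a curve in $\Pi_{\nu}$ through $x$ with velocity $u\in T_{x}\Pi_{\nu}=x^{\perp}\cap\nu^{\perp}$ gives
$$
d\Gamma_{x}(u)=-x^{-1}u\,x^{-1}\nu=-x^{-1}u\,\Gamma(x),
$$
so $d\Gamma_{x}$ is the restriction to $T_{x}\Pi_{\nu}$ of the linear map $\Phi(z)=-x^{-1}z\,\Gamma(x)$ on $\matH$. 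Writing $\Phi=L_{x^{-1}}\circ R_{\Gamma(x)}\circ(-\mathrm{id})$, where $L_{g}$ and $R_{g}$ denote left and right multiplication by a unit quaternion $g$, each factor lies in $SO(4)$, since left and right translations by unit quaternions and $-\mathrm{id}=\mathrm{diag}(-1,-1,-1,-1)$ are orientation-preserving isometries of $\R^{4}$; hence $\Phi\in SO(4)$.

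To see that $\Gamma$ is a local isometry I would use the identity $\la v,v\ra=\det v$ for $v\in\matH$. Since $x,\nu,\Gamma(x)\in SU(2)$ all have determinant $1$ and $\det(-M)=\det M$ for $2\times2$ matrices,
$$
\la d\Gamma_{x}(u),d\Gamma_{x}(u)\ra=\det\bigl(-x^{-1}u\,\Gamma(x)\bigr)=\det u=\la u,u\ra,
$$
so $d\Gamma_{x}$ preserves norms and, by polarization, is a linear isometry onto $T_{\Gamma(x)}\Si$. Thus $\Gamma$ is a local isometry between the closed surfaces $\Pi_{\nu}$ and $\Si$; since $\Gamma(x)=\Gamma(x')$ forces $x^{-1}\nu=x'^{-1}\nu$ and hence $x=x'$, it is injective, and an injective local isometry between compact connected surfaces of equal dimension is a global isometry (its image is open, compact, hence closed, and $\Si$ is connected).

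For the orientation claim I would translate both orientation conventions into frames of $\R^{4}$: by the conventions of Section 2, $(u_{1},u_{2})$ is positively oriented on $\Pi_{\nu}$ at $x$ iff $(x,u_{1},u_{2},\nu)$ is positive in $\R^{4}$, while $(w_{1},w_{2})$ is positive on $\Si=\Pi_{-\one}$ at $y$ iff $(y,w_{1},w_{2},-\one)$ is positive. Hence $\Gamma$ preserves orientation exactly when $(\Gamma(x),\Phi u_{1},\Phi u_{2},-\one)$ is positive for every positive frame $(u_{1},u_{2})$. Applying the orientation-preserving $\Phi^{-1}\in SO(4)$, this is equivalent to positivity of $(\Phi^{-1}\Gamma(x),u_{1},u_{2},\Phi^{-1}(-\one))$. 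The two decisive simplifications, both exploiting that $\Gamma(x)\in\Si$ gives $\Gamma(x)^{-1}=-\Gamma(x)$ together with $x\,\Gamma(x)=\nu$, are
$$
\Phi^{-1}(\Gamma(x))=-x,\qquad \Phi^{-1}(-\one)=x\,\Gamma(x)^{-1}=-x\,\Gamma(x)=-\nu.
$$
The frame thus becomes $(-x,u_{1},u_{2},-\nu)$, which differs from $(x,u_{1},u_{2},\nu)$ by two sign changes and so carries the same orientation; by hypothesis the latter is positive, so $\Gamma$ preserves orientation.

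Finally, the same computation applies verbatim to $\Gamma'(x)=\om'(x,\nu)=\nu x^{-1}$: its differential is the restriction of $\Phi'(z)=-\Gamma'(x)\,z\,x^{-1}=L_{\Gamma'(x)}\circ R_{x^{-1}}\circ(-\mathrm{id})\in SO(4)$, the determinant computation is unchanged, and $(\Phi')^{-1}(\Gamma'(x))=-x$, $(\Phi')^{-1}(-\one)=-\nu$ follow from $\Gamma'(x)^{-1}=-\Gamma'(x)$ and $\Gamma'(x)\,x=\nu$. I expect the only delicate point to be the orientation bookkeeping of the third paragraph: one must correctly reconcile the two differing normal conventions (normal $\nu$ for $\Pi_{\nu}$ against normal $-\one$ for $\Si$) and verify the clean cancellations $\Phi^{-1}(\Gamma(x))=-x$ and $\Phi^{-1}(-\one)=-\nu$, which are precisely what make the two sign changes conspire to preserve orientation. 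The isometry part, by contrast, reduces to the single determinant identity above.
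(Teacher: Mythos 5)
Your proof is correct, but it takes a genuinely different route from the paper's. The paper's argument is a one-liner: using the identity $\om(x,\nu)=x^{-1}\nu=-\nu^{-1}x$ (which holds because $\om(x,\nu)\in\Si$ squares to $-\one$), it recognizes $\Gamma$ as the restriction to $\Pi_{\nu}$ of the single global left translation $L_{-\nu^{-1}}$ of $SU(2)$; this is an orientation-preserving isometry of $S^{3}$ that carries $\Pi_{\nu}$ to $\Pi_{-\one}=\Si$ and carries the defining normal field $\nu$ to $-\one$, so it matches the two orientation conventions at once and there is nothing left to check. You instead work pointwise: you compute $d\Gamma_{x}$, identify it as the restriction of an $x$-dependent element $\Phi\in SO(4)$, get the metric statement from $\det$, get bijectivity from injectivity plus an open-closed argument, and settle orientation by chasing the frame $(x,u_{1},u_{2},\nu)$ through $\Phi^{-1}$ to $(-x,u_{1},u_{2},-\nu)$. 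All of these steps are sound (in particular the identities $\Phi^{-1}(\Gamma(x))=-x$ and $\Phi^{-1}(-\one)=-\nu$ check out), and your argument has the virtue of not needing the identity $x^{-1}\nu=-\nu^{-1}x$ up front; the cost is that it is considerably longer and obscures the fact that your $\Phi$, restricted to $T_{x}\Pi_{\nu}$, is nothing but left multiplication by the constant $-\nu^{-1}$ — which is exactly the observation that collapses the whole proof to one line. It is worth internalizing that trick, since the paper reuses left/right translations in the same spirit in Section 7.
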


\begin{proof}
For an element $g \in S^{3} = SU(2)$, the left translation $L_{g}:S^{3} \to S^{3}$ defined by 
$L_{g}(x)=gx$ is an isometry. 
Since 
$$
\Gamma(x)=\om(x,\nu)=x^{-1}\nu=-\nu^{-1}x=L_{-\nu^{-1}}(x), 
$$ 
the map $\Gamma=L_{-\nu^{-1}}:\Pi_{\nu} \to \Pi_{-\one}=\Si$ is an orientation preserving isometry. 
The proof for $\Gamma'$ is almost the same. 
\end{proof}

\begin{rem}
The map $\Gamma:\Pi_{\nu} \to \Si$ can be expressed as the composition $\om \circ \iota$, 
where $\iota:\Pi_{\nu} \to T^{1}S^{3}; \, x \to (x,\nu)$ is the natural immersion 
that assigns each $x \in \Pi_{\nu}$ to the unit normal vector $(x,\nu) \in T^{1}_{x} S^{3}$ to $\Pi_{\nu}$.  
Therefore, the map $\Gamma$ can be regarded as a kind of Gauss map of $\Pi_{\nu}$. 
The same can be said for $\Gamma'$.  
\end{rem}

\section{Proof of the Main Theorem}

In this section, we give the proof of the main theorem. 
Let $P$ be a convex polyhedral sphere in $S^{3}$, 
and let $\{f_{i}\}_{i=1}^{l}$, $\{e_{j}\}_{j=1}^{m}$ and 
$\{v_{k}\}_{k=1}^{n}$ be the sets of faces, edges and vertices of $P$, respectively. 
Recall from Section 5 that the set $\wt P$ of outward unit normal vectors to $P$ is tiled by polygons in 
$\Poly(\tilde P)=\{\tilde f_{i}\}_{i=1}^{l} \cup \{\tilde e_{j}\}_{j=1}^{m} \cup \{\tilde v_{k}\}_{k=1}^{n}$. 
In what follows, the restriction $\om|_{\wt P}$ of 
the left Maurer-Cartan form $\om:T^{1}S^{3} \to \Si$ to $\wt P$ is also denoted by $\om$.  
Similarly, $\om'|_{\wt P}$ is also denoted by $\om'$. 
\begin{prop}
The map $\om:\wt P \to \Si$ has the following properties: 
\begin{enumerate}
\item $\om:\wt P \to \Si$ is continuous. 
\item For every $1 \le i \le l$, the map 
$\om|_{\tilde f_{i}}:\tilde f_{i} \to \Si$ is an orientation preserving isometry onto its image. 
\item For every $1 \le k \le n$, the map $\om|_{\tilde v_{k}}:\tilde v_k \to \Si$ is an orientation preserving isometry
onto its image. 
\item For every $1 \le j \le m$, the image $\om(\tilde e_{j})$ of $\tilde e_{j}$ 
is a geodesic segment on $\Si$. 
\end{enumerate}
The map $\om':\wt P \to \Si$ also has the same properties.  
\end{prop}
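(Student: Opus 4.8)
The plan is to establish the four properties of $\om$ in order, obtaining (1)--(3) directly from Lemma 6.3 together with the invariance of the Maurer--Cartan form, and reserving the real work for (4). Property (1) is immediate, since $\om(x,\nu)=x^{-1}\nu$ is the restriction to $\wt P \subset T^{1}S^{3}$ of a map that is manifestly smooth on $\matH \times \matH$. Throughout I will use the left-invariance $\om(gx,g\nu)=(gx)^{-1}(g\nu)=x^{-1}\nu=\om(x,\nu)$.

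For (2), on $\tilde f_{i}=f_{i}\times\{f_{i}^{*}\}$ the normal coordinate is frozen at $f_{i}^{*}$, so under the identification $\tilde f_{i}\cong f_{i}$ the map $\om|_{\tilde f_{i}}$ is exactly the restriction to $f_{i}$ of $\Gamma(x)=\om(x,f_{i}^{*})$ from Lemma 6.3. Since $\la x,f_{i}^{*}\ra=0$ for $x\in f_{i}$ we have $f_{i}\subset\Pi_{f_{i}^{*}}$, and the chosen orientation on $f_{i}$ is that of $\Pi_{f_{i}^{*}}$; hence Lemma 6.3 gives that $\om|_{\tilde f_{i}}$ is an orientation preserving isometry onto its image. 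For (3), on $\tilde v_{k}=\{v_{k}\}\times v_{k}^{*}$ the base point is frozen at $v_{k}$, and $\om(v_{k},\nu)=v_{k}^{-1}\nu=L_{v_{k}^{-1}}(\nu)$ is a left translation in $\nu$. A short computation with the inner product (6.1) shows that $L_{g}$ maps the oriented hyperplane $\Pi_{\nu}$ onto $\Pi_{g\nu}$ by an orientation preserving isometry; taking $g=v_{k}^{-1}$ and using that $v_{k}^{*}\subset\Pi_{v_{k}}$ carries the orientation induced by its inward normal $-v_{k}$, this sends $\Pi_{-v_{k}}$ onto $\Pi_{-\one}=\Si$, so $\om|_{\tilde v_{k}}$ is an orientation preserving isometry onto its image.

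The essential step is (4). Here I would first use left-invariance to replace $e_{j}$ by its left translate under $x_{0}^{-1}$, where $x_{0}$ lies on the geodesic $\gamma_{j}\supset e_{j}$; this leaves $\om(\tilde e_{j})$ unchanged while arranging $\gamma_{j}(s)=\exp(s\xi)$ for a unit $\xi\in\Si$. The polar geodesic $\gamma_{j}^{*}\supset e_{j}^{*}$ then lies in the plane $\xi^{\perp}\cap\su(2)$; choosing a unit $\eta$ there and setting $\zeta=\xi\eta$ yields $\gamma_{j}^{*}(t)=\cos t\,\eta+\sin t\,\zeta=\exp(t\xi)\eta=\gamma_{j}(t)\eta$. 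Therefore
\[
\om(\gamma_{j}(s),\gamma_{j}^{*}(t))=\gamma_{j}(s)^{-1}\gamma_{j}(t)\,\eta=\exp((t-s)\xi)\,\eta=\cos(t-s)\,\eta+\sin(t-s)\,\zeta,
\]
which depends only on $u=t-s$ and sweeps out the great circle in the $\{\eta,\zeta\}$-plane of $\Si$. As $(s,t)$ ranges over the parameter rectangle of $\tilde e_{j}$, $u$ ranges over an interval, so $\om(\tilde e_{j})$ is a geodesic segment.

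The statements for $\om'$ are proved in the same way, with right translations and right-invariance in place of the left ones; for (4) the analogous reduction gives a family depending on $s+t$, which is again a geodesic arc. I expect (4) to be the main obstacle: its content is that the two-parameter map $\om|_{\tilde e_{j}}$ degenerates to a one-dimensional image, a phenomenon invisible from Lemma 6.3 and resting on the identity $\gamma_{j}^{*}(t)=\gamma_{j}(t)\eta$, which expresses the polar geodesic as a right translate of the one-parameter subgroup $\gamma_{j}$.
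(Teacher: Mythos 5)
Your proposal is correct and follows essentially the same route as the paper: (1) is immediate, (2) and (3) reduce to the lemma on $\Gamma=L_{-\nu^{-1}}$ and to left translations respectively, and (4) rests on the identity $\gamma_{j}^{*}(t)=\gamma_{j}(t)\eta$, which makes $\om$ on $\tilde e_{j}$ depend only on $t-s$. Your computation for (4) is just a coordinate-free version of the paper's normalization $e_{j}\subset K=\{\exp(t\mi)\}$, $e_{j}^{*}\subset K\mj$, with $(\xi,\eta,\zeta)$ playing the role of $(\mi,\mj,\mk)$ (and the cited lemma is numbered 6.2, not 6.3).
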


\begin{proof}
The proof of (1) is obvious.  The proof of (2) follows from Lemma 6.2. 
The proof of (3)  is also easy. 
We now prove (4). 
Let $K$ be a one-parameter subgroup of $SU(2)$ defined by 
$$
K=\left\{\exp(t \mi)=\left(\begin{array}{cc}
e^{it}  &0   \\
 0 & e^{-it} 
\end{array}\right) : t \in \R\right\}. 
$$
Set $x_{t}=\exp( t \mi)$. 
Since $x_{t}=(\cos t)\one+(\sin t)\mi$, it can be seen that $K=S^{3} \cap \Span_{\R}(\one,\mi)$, 
which implies that  $K$ is a geodesic on $S^{3}=SU(2)$. 
Its dual geodesic is written as 
$S^{3} \cap \Span_{\R}(\mj,\mk)=K \mj=\{y_{s}:=x_{s} \mj: x_{s} \in K\}$. 

By applying an isomety to $S^{3}$ if necessary,  
we may assume that $e_{j} \subset K$ and $e_{j}^{*} \subset K \mj$. 
Furthermore, we may assume that $e_{j}$ and $e_{j}^{*}$ are written in the forms  
$e_{j}=\{x_{s}:  0 \le s \le a\}$ and $e_{j}^{*}=\{y_{t}:0 \le t \le b\}$, respectively. 
Then for $(x_{s},y_{t}) \in \tilde e_{j}$, we have 
$\om(x_{s},y_{t})=x_{s}^{-1}y_{t}=x_{-s}x_{t}\mj=y_{t-s}$, 
which implies that the image $\om(\tilde e_{j})=\{y_{u}: -a \le u \le b-a\}$ of $\tilde e_{j}$
 lies in the  geodesic $K \mj$ on $S^{3}$. 
 
 The proof of the statement for $\om'$ is almost the same; 
 however, in this case, we have $\om'(x_{s},y_{t})=y_{t}x_{s}^{-1}=y_{t+s}$ for $(x_{s},y_{t}) \in \tilde e_{j}$.  
\end{proof}

We set subsets of $\Si$ as 
\begin{align*}
\begin{array}{lll}
F_{i}:=\om(\tilde f_{i}), \quad & E_{j}:=\om(\tilde e_{j}), \quad & V_{k}:=\om(\tilde v_{k}), \\
F_{i}':=\om'(\tilde f_{i}), \quad & E_{j}':=\om'(\tilde e_{j}), \quad & V_{k}':=\om'(\tilde v_{k})
\end{array}
\end{align*}
for $1 \le i \le l$, $1 \le j \le m$ and $1 \le k \le n$. 
Then $F_{i} ,\, F_{i}' \, (i=1, \ldots, l)$ and $V_{k},\, V_{k}' \, (k=1, \ldots, n)$ 
are spherical polygons in $\Si$ with geodesic boundaries and 
$E_{j}, \,E_{j}'\, (j=1, \ldots, m)$ are geodesic segments on $\Si$. 
The following theorem is the precise statement of the main theorem: 
\begin{thm} The spherical polygons 
${F}_{1}, \ldots ,{F}_{l}, {V}_{1}, \ldots ,{V}_{n}$  provide a non-edge-to-edge tiling of $\Si$. 
More precisely, we have 
\begin{enumerate}
\item The polygons ${F}_{1}, \ldots ,{F}_{l}, {V}_{1}, \ldots ,{V}_{n}$  cover all of $\Si$; i.e., 
\begin{align*}
\Si=\left( \bigcup_{i=1}^{l}F_{i}\right) \cup \left( \bigcup_{k=1}^{n}V_{k}\right). 
\end{align*}
\item 
The interiors 
$\mathring{F}_{1}, \ldots ,\mathring{F}_{l}, \mathring{V}_{1}, \ldots ,\mathring{V}_{n}$ 
of polygons ${F}_{1}, \ldots ,{F}_{l}, {V}_{1}, \ldots ,{V}_{n}$ 
are mutually disjoint. 
\end{enumerate}
Similarly, the spherical polygons 
${F}_{1}', \ldots ,{F}_{l}', {V}_{1}', \ldots ,{V}_{n}'$  also provide a non-edge-to-edge tiling of $\Si$. 
\end{thm}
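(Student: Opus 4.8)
The plan is to study the restriction $\om:\wt P\to\Si$ as a single continuous map of oriented topological $2$-spheres and to prove that it has degree one; both parts (1) and (2) then fall out of this. By Proposition 5.1 the space $\wt P$ is homeomorphic to $S^2$ and carries the orientation obtained by fitting together the orientations of the cells in $\Poly(\wt P)$, while $\Si$ is oriented as $\Pi_{-\one}$. By Proposition 7.1 the map $\om$ is an orientation-preserving isometry on each face cell $\tilde f_i$ and each vertex cell $\tilde v_k$, and it collapses each rectangle $\tilde e_j$ onto the geodesic segment $E_j$. I set $Z:=\bigcup_{j=1}^m E_j$, a finite union of geodesic arcs, hence a closed set of measure zero in $\Si$. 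Since every boundary edge of a cell $\tilde f_i$ or $\tilde v_k$ is shared with some rectangle (cases (iii) and (iv) of Proposition 5.1), its image lies in $Z$, so $\bd F_i\cup\bd V_k\subset Z$ for all $i,k$.

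First I would compute the degree of $\om$ by pulling back the area form $\vol_\Si$ of $\Si$. On the interior of each $\tilde f_i$ and $\tilde v_k$ one has $\om^*\vol_\Si=\vol_{\wt P}$, since $\om$ is there an orientation-preserving isometry, whereas $\om^*\vol_\Si=0$ on each rectangle $\tilde e_j$ because its image is one-dimensional. As $\om$ is continuous and piecewise smooth, hence Lipschitz, the standard identity $\int_{\wt P}\om^*\vol_\Si=\deg(\om)\cdot\Area(\Si)$ applies, and its left-hand side equals $\sum_{i=1}^l\Area(\tilde f_i)+\sum_{k=1}^n\Area(\tilde v_k)=\Area(P)+\Area(P^*)=4\pi=\Area(\Si)$ by Lemma 4.3. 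Therefore $\deg(\om)=1$.

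With the degree in hand I would argue as follows. Fix any $p\in\Si\setminus Z$. No preimage of $p$ can lie on a rectangle or on a cell edge, since these map into $Z$; hence every point of $\om^{-1}(p)$ lies in the interior of some $\tilde f_i$ or $\tilde v_k$, where $\om$ is an orientation-preserving local diffeomorphism and so contributes $+1$ to the degree, with at most one preimage per cell. Consequently the number of preimages of $p$ equals $\deg(\om)=1$, so $p$ lies in exactly one of the open polygons $\mathring{F}_i,\mathring{V}_k$. This yields (2): if two of the open polygons met, their intersection would be a nonempty open set meeting $\Si\setminus Z$, furnishing a point with at least two preimages, contradicting degree one. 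For (1), $\deg(\om)=1\neq0$ forces $\om$ to be surjective, and since $\om(\wt P)=\bigcup_i F_i\cup\bigcup_k V_k$ (each $E_j$ being contained in the boundary of an adjacent image polygon), the polygons cover $\Si$. The map $\om'$ is handled verbatim using the corresponding clauses of Proposition 7.1. That the tiling is non-edge-to-edge is read off from the local structure in Proposition 7.1(4): across each segment $E_j$ the adjacent images are slid relative to one another by the exterior dihedral angle (cf.\ Lemma 4.1), so no full edge of one tile coincides with a full edge of its neighbor.

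The heart of the matter, and the main obstacle, is precisely the passage from the local to the global picture that the introduction flags as difficult: Proposition 7.1 only records local isometric behavior, and without a global device one cannot a priori exclude overlaps or gaps. The degree computation is that device, so the two delicate inputs are justifying $\int_{\wt P}\om^*\vol_\Si=\deg(\om)\Area(\Si)$ for the collapsing Lipschitz map $\om$, and verifying that every local degree is $+1$, which rests on the orientation compatibility furnished by Proposition 5.1 together with the orientation-preserving assertion of Proposition 7.1.
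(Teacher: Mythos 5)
Your argument is correct, but it is a genuinely different route from the paper's. The paper makes no use of degree theory: instead it perturbs $\om$ to an honest local homeomorphism $\om^{\ep}:\wt P \to \Si$ by shrinking each image polygon $F_{i}, V_{k}$ to a slightly smaller polygon $F_{i}^{\ep}, V_{k}^{\ep}$ and replacing each collapsed rectangle $\tilde e_{j}$ by a genuine parallelogram $E_{j}^{\ep}$ wedged between the four shrunken neighbors; Lemma 7.3 (a proper local homeomorphism of $S^{2}$ is a homeomorphism) then shows $\om^{\ep}$ is a global homeomorphism, and letting $\ep \to 0$ yields the tiling. Your degree computation replaces that perturbation: you reuse Lemma 4.3 ($\Area(P)+\Area(P^{*})=4\pi$), which the paper invokes only to prove $\chi(\wt P)=2$ in Proposition 5.1, a second time as the global input that pins $\deg(\om)$ to $1$ and hence rules out both overlaps and gaps at once. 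This is conceptually clean and avoids constructing the $\ep$-polygons, at the cost of one technical point you should tighten: rather than asserting the identity $\int_{\wt P}\om^{*}\vol_{\Si}=\deg(\om)\Area(\Si)$ for the merely Lipschitz, rank-degenerate map $\om$, it is more elementary to note that every $p\in\Si\setminus Z$ has finite preimage contained in the open cells, where all local degrees are $+1$ (by Propositions 5.1 and 7.1), so $\#\om^{-1}(p)=\deg(\om)$ is constant off $Z$; integrating $\#\om^{-1}$ over $\Si$ then gives $\deg(\om)\cdot 4\pi=\sum_{i}\Area(F_{i})+\sum_{k}\Area(V_{k})=\Area(P)+\Area(P^{*})=4\pi$ directly, with no appeal to a change-of-variables formula for Lipschitz maps. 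With that substitution the rest of your deductions of (1), (2), the case of $\om'$, and the non-edge-to-edge property from the local slide along each $E_{j}$ all go through.
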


In the proof of Theorem 7.2, we make use of the following:
\begin{lem}
If $f:S^{2} \to S^{2}$ is a local homeomorphism, then $f$ is a homeomorphism. 
\end{lem}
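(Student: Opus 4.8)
The plan is to show that $f$ is a covering map and then to use the topology of the $2$-sphere to force it to have a single sheet. First I would record that a local homeomorphism is an open map, so $f(S^{2})$ is open in $S^{2}$; since $S^{2}$ is compact and $f$ continuous, $f(S^{2})$ is also compact, hence closed in the Hausdorff space $S^{2}$. As $S^{2}$ is connected and $f(S^{2})$ is nonempty, open and closed, we conclude $f(S^{2})=S^{2}$, i.e.\ $f$ is surjective. Moreover, for each $y \in S^{2}$ the fibre $f^{-1}(y)$ is discrete (because $f$ is a local homeomorphism, distinct preimages have disjoint neighborhoods on which $f$ is injective) and closed in the compact space $S^{2}$, hence finite; write $f^{-1}(y)=\{x_{1},\dots,x_{n}\}$.

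Next I would verify that $f$ is a covering map by producing an evenly covered neighborhood of each $y$. Using that $S^{2}$ is Hausdorff together with the local homeomorphism property, choose pairwise disjoint open sets $U_{1},\dots,U_{n}$ with $x_{i} \in U_{i}$ and $f|_{U_{i}}$ a homeomorphism onto the open set $f(U_{i})$. The set $C:=S^{2} \setminus (U_{1} \cup \cdots \cup U_{n})$ is compact, so $f(C)$ is closed and does not contain $y$; therefore $V:=\bigl(S^{2} \setminus f(C)\bigr) \cap f(U_{1}) \cap \cdots \cap f(U_{n})$ is an open neighborhood of $y$. Setting $V_{i}:=U_{i} \cap f^{-1}(V)$, one checks that $f^{-1}(V)=V_{1} \cup \cdots \cup V_{n}$ is a disjoint union and each $f|_{V_{i}}:V_{i} \to V$ is a homeomorphism. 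Thus $f$ is a covering map, and the number of sheets $|f^{-1}(y)|$ is locally constant, hence constant equal to some $n$ by connectedness of $S^{2}$.

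Finally I would conclude that $n=1$. Since $S^{2}$ is simply connected, the number of sheets of a connected covering equals the index of $f_{*}\pi_{1}(S^{2})$ in $\pi_{1}(S^{2})=\{1\}$, which is $1$; hence $f$ is a bijection. (Equivalently, multiplicativity of the Euler characteristic under finite coverings gives $\chi(S^{2})=n\,\chi(S^{2})$, i.e.\ $2=2n$, so $n=1$.) A continuous bijection from the compact space $S^{2}$ to the Hausdorff space $S^{2}$ is automatically a homeomorphism, completing the proof.

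I expect the main obstacle to be the middle step: verifying that $f$ is genuinely a covering map, and in particular constructing the evenly covered neighborhood $V$. Compactness of $S^{2}$ enters crucially here, both to make the fibres finite and to guarantee, via the closed set $f(C)$, that the trivializing neighborhood $V$ can be chosen so that $f^{-1}(V)$ is contained in $U_{1} \cup \cdots \cup U_{n}$.
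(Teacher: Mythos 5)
Your argument is correct and follows essentially the same route as the paper: the paper simply cites the theorem that a proper local homeomorphism between manifolds is a covering map (Forster, Theorem 4.22) and then concludes from the simple connectivity of $S^{2}$, whereas you supply a self-contained proof of that covering-map step (surjectivity via open-plus-closed image, finiteness of fibres by compactness, and the explicit construction of the evenly covered neighborhood $V$) before invoking $\pi_{1}(S^{2})=\{1\}$. All the details check out, so this is a complete, if more explicit, version of the paper's own argument.
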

This is a special case of a famous theorem whose statement is as follows: 
Suppose $X$ and $Y$ are topological manifolds and $f:X \to Y$ is a proper local homeomorphism. 
Then $f$ is a covering map (see for example \cite{For}, Theorem 4.22). 

\begin{proof}[Proof of Theorem 7.2]
We are mainly concerned with the statement about the polygons 
$\{F_{i}\}_{i=1}^{l} \cup \{V_{k}\}_{k=1}^{n}$ 
because the arguments for the polygons $\{F_{i}'\}_{i=1}^{l} \cup \{V_{k}'\}_{k=1}^{n}$ are almost the same. 
We first observe the local configuration of these polygons.  
Take an edge $e_{j}$ of $P$ and choose an orientation of $e_{j}$ arbitrarily.   
Let $f_{\alp_{1}}, f_{\alp_{2}}$ be the faces of $P$ that meet along edge $e_{j}$, 
and suppose that $f_{\alp_{1}}$ is on the left side of $e_{j}$ 
and $f_{\beta_{2}}$ is on the right side.  
Let  $v_{\beta_{1}}$  (resp.  $v_{\beta_{2}}$) be  the starting (resp. terminal) point of $e_{j}$. 
As in the proof of Proposition 7.1 (4), we may assume that 
$e_{j}$ and $e_{j}^{*}$ are in the forms 
$e_{j}=\{x_{s}:  0 \le s \le a\}$ and $e_{j}^{*}=\{y_{t}:0 \le t \le b\}$, respectively. 
In addition, we may assume that the orientations on $e_{j}$ and $e_{j}^{*}$ 
coincide with those induced from the parametrizations given by $x_{s}$ and $y_{t}$, respectively. 
Then, $E_{j}=\om(\tilde e_{j})$ is of the form 
$E_{j}=\{y_{u}: -a \le u \le b-a\} \subset K \mj$.  
The proof of Proposition 7.1 (4) shows that $F_{\alp_{1}}$ is on the right side of $E_{j}$ 
and $F_{\alp_{1}} \cap E_{j}=\{y_{u}: -a \le u \le 0\}$, where 
$E_{j}$ is oriented from the parametrization of $y_{t}$. 
Similarly, $F_{\alp_{2}}$ is on the left side of $E_{j}$  
and $F_{\alp_{2}} \cap E_{j}=\{y_{u}: b-a \le u \le b\}$. 
In addition, $V_{\beta_{1}}$ (resp. $V_{\beta_{2}}$) is on the right side (resp. left side) of $E_{j}$ 
and $V_{\beta_{1}} \cap E_{j}=\{y_{u}: 0 \le u \le b\}$ 
(resp.$V_{\beta_{2}} \cap E_{j}=\{y_{u}: -a \le u \le b-a\}$). 
Furthermore, observe that 
$F_{\alp_{1}}$ and $V_{\beta_{1}}$, as well as $F_{\alp_{2}}$ and $V_{\beta_{2}}$, 
meet along a geodesic segment,  
because $E_{k}=\om(\tilde e_{k})$ is a geodesic segment for each $1 \le k \le n$. 

\begin{figure}[h]
\begin{center}
\includegraphics[height=6cm]{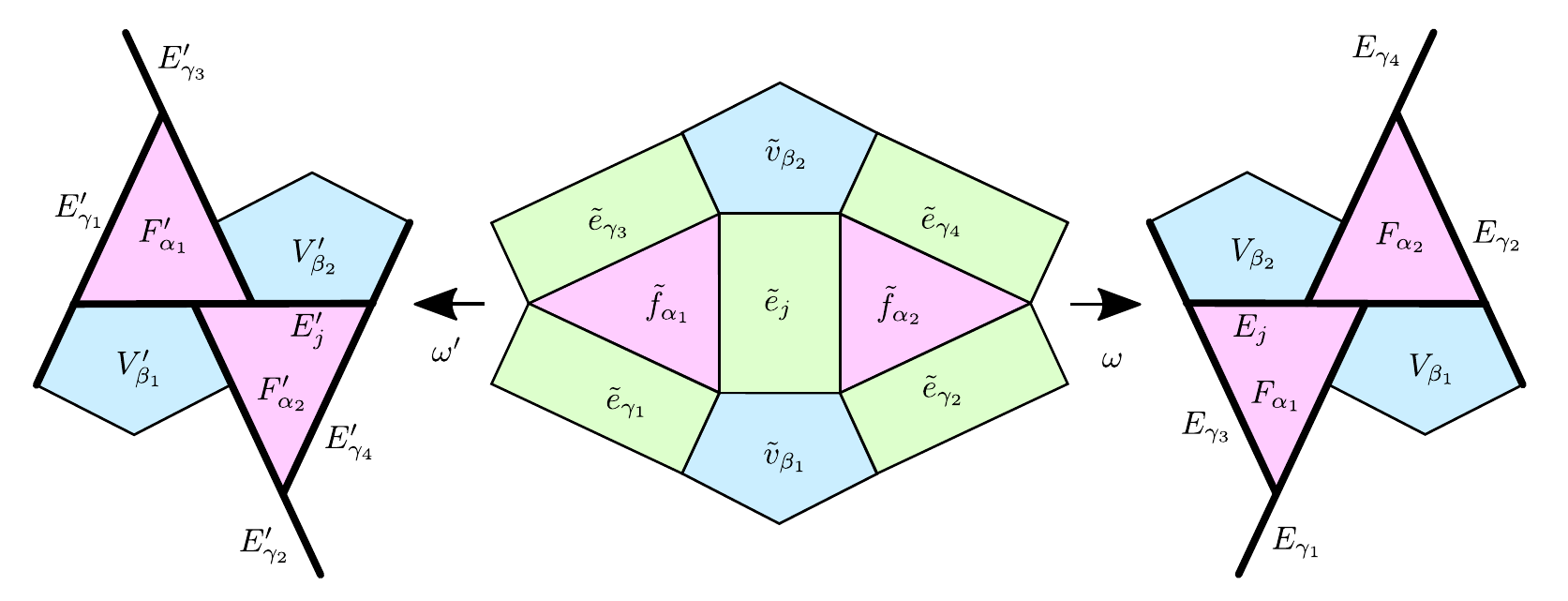}
\caption{Polygons in $\wt P$ adjacent to $\tilde e_{j}$ and 
their images in $\Si$ under the maps $\om$ and $\om'$. }
\end{center}
\end{figure}

So far, we have observed that polygons in $\wt P$ adjacent to $\tilde e_{j}$ 
are mapped by $\om$ to polygons and geodesic segments in $\Si$ 
to give local non-edge-to-edge tilings. 
What we need to show is that $\{F_{i}\}_{i=1}^{l}$ and $\{V_{k}\}_{k=1}^{n}$ give 
non-edge-to-edge tiling of whole $\Si$. 
To this end, for every sufficiently small $\ep>0$, we construct local homeomorphism
$$
\om^{\ep}:\wt P \to \Si, 
$$
such that $\om^{\ep}$ converges uniformly to $\om$ as $\ep$ tends to $0$. 
For each $i =1, \ldots, l$, we construct a geodesic polygon $F_{i}^{\ep}$ in $\Si$,  
which lies in $F_{i}$ as follows: 
Suppose that $F_{i}$ is an $a_{i}$-gon, and 
let $v_{i,1}, \ldots, v_{i,a_{i}}$ be vertices of $F_{i}$, which are 
negatively ordered in $\bd F_{i}$.  In addition, we set $v_{i,a_{i}+1}=v_{i,1}$. 
For each $j=1, \ldots, a_{i}$, 
let $v_{i,j}^{\ep}$ be the point on the edge of $F_{i}$ joining $v_{i,j}$ and $v_{i,j+1}$  
such that $d(v_{i,j},v_{i,j}^{\ep})=\ep$ (see Figure 5 (left)). 
We then define a polygon $F_{i}^{\ep}$ to be the geodesic polygon with 
vertices $v_{i,1}^{\ep}, \ldots, v_{i,a_{i}}^{\ep}$, 
which are negatively ordered in $\bd F_{i}^{\ep}$. 

For each $k=1, \ldots, n$, we similarly construct a geodesic polygon 
$ V_{k}^{\ep}$ in $V_{k}$ as follows: 
Suppose that $V_{k}$ is an $b_{k}$-gon, and 
let $w_{k,1}, \ldots, w_{k,b_{k}}$ be vertices of $V_{k}$, which are 
{\it positively} ordered in $\bd V_{k}$.  In addition, we set $w_{k,b_{k}+1}=w_{k,1}$. 
For each $j=1, \ldots, b_{k}$, 
let $w_{k,j}^{\ep}$ be the point on the edge of $V_{k}$ joining $w_{k,j}$ and $v_{k,j+1}$ 
such that $d(w_{k,j},w_{k,j}^{\ep})=\ep$ (see Figure 5 (right)). 
We then define a polygon $V_{k}^{\ep}$ to be the geodesic polygon with 
vertices $w_{k,1}^{\ep}, \ldots, w_{k,b_{k}}^{\ep}$, which are positively ordered in $\bd V_{k}^{\ep}$. 

\begin{figure}[h]
\begin{center}
\includegraphics[height=5cm]{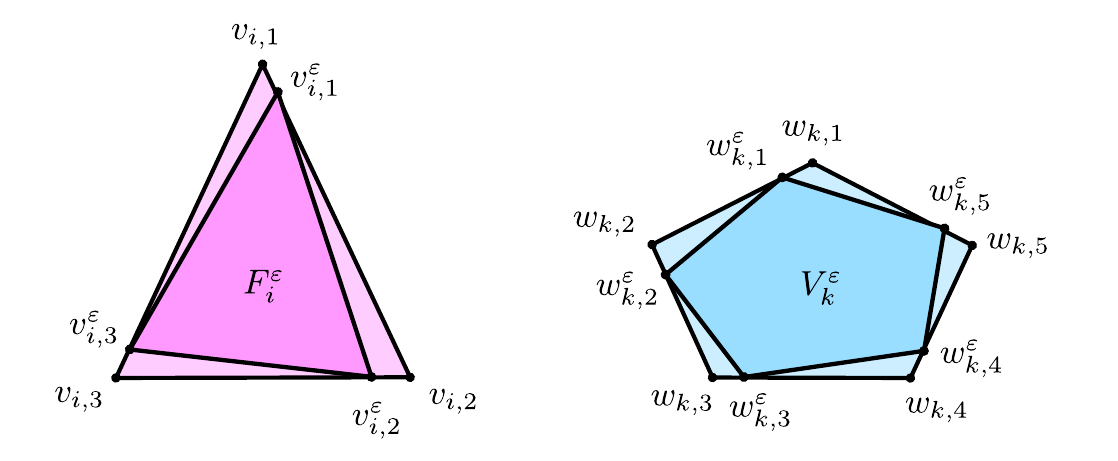}
\caption{Polygons $F_{i}^{\ep} \subset F_{i}$ (left) and $V_{k}^{\ep} \subset V_{k}$ (right) 
in the case where $F_{i}$ is a triangle 
and $V_{k}$ is a pentagon. }
\end{center}
\end{figure}

To define the map $\om^{\ep}:\wt P \to \Si$,  we define $\om^{\ep}$ on each polygon in $\Poly(\wt P)$. 
In what follows, for two polygons $\wp,\wp'$, we say that a homeomorphism $f:\wp \to \wp'$ 
{\it polygonal} if $\wp$ and $\wp'$ have the same number of vertices,  
and $f$ takes vertices of $\wp$ to those of $\wp'$. 
For every sufficiently small $\ep>0$, 
we construct orientation preserving polygonal 
homeomorphisms 
$$
\om^{\ep}|_{\tilde f_{i}}:\tilde f_{i} \to F_{i}^{\ep}
$$
such that  $\om^{\ep}|_{\tilde f_{i}}$ converges uniformly to $\om|_{\tilde f_{i}}$ as $\ep \to 0$. 
Similarly, we construct orientation preserving polygonal  homeomorphisms 
$\om^{\ep}|_{\tilde v_{k}}:\tilde v_{k} \to V_{k}^{\ep}$. 

Given an edge $e_{j}$ of $P$,  take faces $f_{\alp_{1}}, f_{\alp_{2}}$ and vertices 
$v_{\beta_{1}}, v_{\beta_{2}}$ of $P$ as in the begining of this proof. 
Recall that polygons $F_{\alp_{1}}, F_{\alp_{2}}, V_{\beta_{1}}$ and $V_{\beta_{2}}$ in $\Si$ 
meet along geodesic segment $E_{j}$ (Figure 4 (right)). 
Then one see that the polygons 
$F_{\alp_{1}}^{\ep}, F_{\alp_{2}}^{\ep}, V_{\beta_{1}}^{\ep}$ and $V_{\beta_{2}}^{\ep}$ surround a parallelogram, which is denoted by $E_{j}^{\ep}$ (see Figure 6).   
We then define polygonal homeomorphisms 
$\om^{\ep}|_{\tilde e_{j}}:\tilde e_{j} \to E_{j}^{\ep}$ so that 
$\om^{\ep}|_{\tilde e_{j}}$ coincides with 
$\om^{\ep}|_{\tilde f_{\alp_{1}}}$ on $\tilde e_{j} \cap \tilde f_{\alp_{1}}$, and so on. 
We also asuume that $\om^{\ep}|_{\tilde e_{j}}$ converge uniformly to 
$\om|_{\tilde e_{j}}$ as $\ep \to 0$.  
\begin{figure}[h]
\begin{center}
\includegraphics[height=5.5cm]{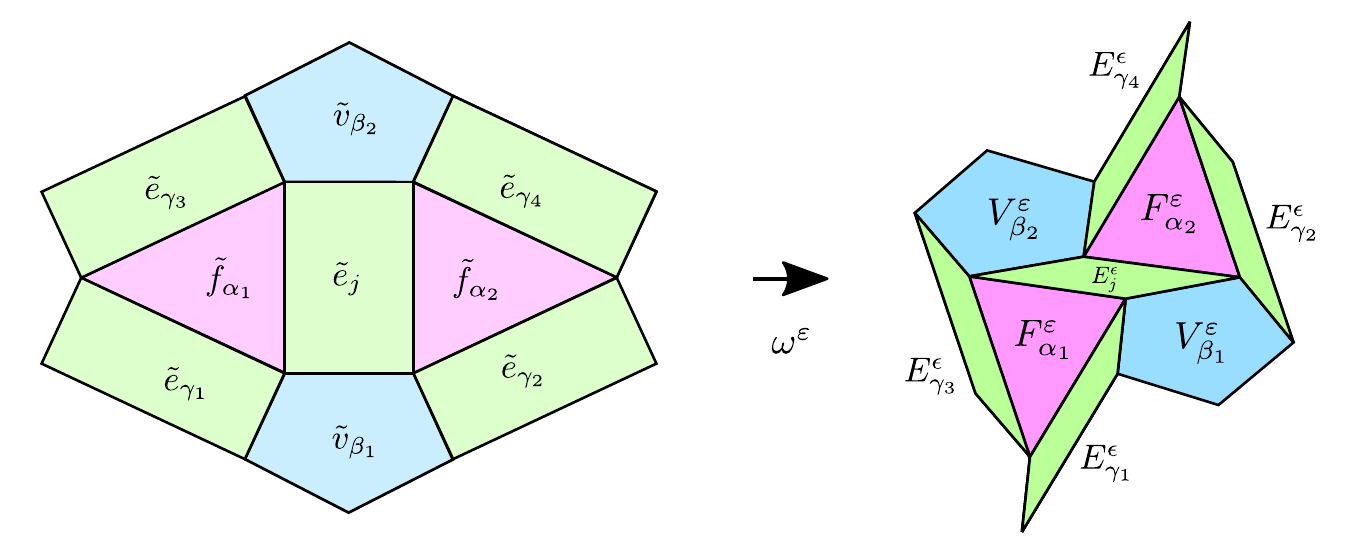}
\caption{Polygons in $\wt P$ adjacent to $\tilde e_{j}$ and 
their images in $\Si$ under the map $\om^{\ep}$.
 Compare this figure with Figure 4.}
\end{center}
\end{figure}
Then, the polygonal homeomorphisms $\om^{\ep}|_{\tilde f_{i}} \, (1 \le i \le l)$, 
$\om^{\ep}|_{\tilde v_{k}}\, (1 \le k \le n)$ and $\om^{\ep}|_{\tilde e_{j}} \, (1 \le j \le m)$  
constructed above can be combined to obtain a local homeomorphism $\om^{\ep}:\wt P \to \Si$. 
It then follows from Lemma 7.3 that $\om^{\ep}:\wt P \to \Si$ is a homeomorphism. 
Therefore, 
$\mathring{F}_{1}^{\ep}, \ldots ,\mathring{F}_{l}^{\ep}, \mathring{V}_{1}^{\ep}, \ldots ,\mathring{V}_{n}^{\ep}$
are mutually disjoint, and 
\begin{align*}
\Si=\left( \bigcup_{i=1}^{l}F_{i}^{\ep}\right) 
\cup 
\left( \bigcup_{j=1}^{m}E_{j}^{\ep}\right)
\cup 
\left( \bigcup_{k=1}^{n}V_{k}^{\ep}\right). 
\end{align*}
Since $\om^{\ep}$ converges to $\om$ uniformly as $\ep \to 0$, 
the sets $F_{i}^{\ep}$, $E_{j}^{\ep}$ and $V_{k}^{\ep}$ converge to $F_{i}$, 
$E_{j}$ and $V_{k}$, respectively, in the sence of Hausdorff distance.  
This completes the proof of Theorem 7.2. 
\end{proof}

\end{document}